\numberwithin{equation}{section}
\numberwithin{figure}{section}
\theoremstyle{plain}
\newtheorem{thm}{\protect\theoremname}
\theoremstyle{remark}
\newtheorem{rem}[thm]{\protect\remarkname}
\theoremstyle{definition}
\newtheorem{example}[thm]{\protect\examplename}
\theoremstyle{remark}
\newtheorem*{rem*}{\protect\remarkname}
\theoremstyle{plain}
\newtheorem{prop}[thm]{\protect\propositionname}
\theoremstyle{plain}
\newtheorem{lem}[thm]{\protect\lemmaname}
\theoremstyle{plain}
\newtheorem{fact}[thm]{\protect\factname}
\newenvironment{lyxcode}
	{\par\begin{list}{}{
		\setlength{\rightmargin}{\leftmargin}
		\setlength{\listparindent}{0pt}
		\raggedright
		\setlength{\itemsep}{0pt}
		\setlength{\parsep}{0pt}
		\normalfont\ttfamily}%
	 \item[]}
	{\end{list}}
\providecommand{\examplename}{Example}
\providecommand{\factname}{Fact}
\providecommand{\lemmaname}{Lemma}
\providecommand{\propositionname}{Proposition}
\providecommand{\remarkname}{Remark}
\providecommand{\theoremname}{Theorem}
\begin{document}
\title{Poles of unramified Degenerate Eisenstein Series}
\email{hegde039@umn.edu}
\address{School of Mathematics, University of Minnesota, Twin-Cities - 55455. }
\subjclass[2020]{11F03, 11M36, 11F72. }
\author{Devadatta G. Hegde}
\begin{abstract}
We determine the locations and the orders of the poles in the half-plane
$\text{Re}(s)\ge0$ of unramified degenerate Eisenstein series attached
to a maximal proper parabolic subgroup of a split semi-simple linear
algebraic group over a number field. 
\end{abstract}

\maketitle
\tableofcontents{}

\section{Introduction}

Let $\mathfrak{H}$ be the usual upper half-plane and $z=x+iy\in\mathfrak{H}$.
The simplest example of Eisenstein series is 
\[
E_{s}(z)=\frac{1}{2}\sum_{c,d\text{ coprime}}\frac{y^{\frac{1}{2}+\frac{s}{2}}}{|cz+d|^{s+1}},\quad\text{Re}(s)>1
\]
where the sum is over all integer pairs $(c,d)\in\mathbb{Z}^{2}$
with $\text{gcd}(c,d)=1$. The sum converges for $\text{Re}(s)>1$.
The map $s\mapsto E_{s}$ has a meromorphic continuation to $\mathbb{C}$
as a vector-valued map taking values in the space of smooth functions
of uniform moderate growth on $SL_{2}(\mathbb{Z})\backslash\mathfrak{H}$
(see Bernstein-Lapid \cite{Lapid_Bernstein}).

The constant term of $E_{s}$ is
\[
\int_{0}^{1}E_{s}(x+iy)dx=y^{\frac{1}{2}+\frac{s}{2}}+c(s)y^{\frac{1}{2}-\frac{s}{2}},
\]
where 
\[
c(s)=\frac{\xi(s)}{\xi(1+s)},\quad\xi(s)=\pi^{-\frac{s}{2}}\Gamma(s/2)\zeta(s)
\]
The Eisenstein series $E_{s}$ satisfies the functional equation 
\[
E_{s}=c(s)E_{-s}
\]

To motivate the objects appearing in the main theorem \ref{thm:intro}
of this paper and to explain the complementary nature of this result
to those of Langlands, we recall Langlands's landmark computation
of the analogues of function $c(s)$ above appearing in the functional
equation of maximal parabolic \emph{cuspidal} Eisenstein series \cite{Langlands-EulerProduct}.
We follow Casselman's account \cite{Cass_LGroup} and some unpublished
notes of Erez Lapid. Now we use the standard notation in the subject
due to Arthur \cite{Arthur-trace}, recalled in section \ref{sec:Degenerate-Eisenstein-Series}. 

Let $G$ be a split reductive group over a number field $F$. Let
$T\subset B\subset G$ be a maximal $F$-split torus $T$ contained
in a Borel subgroup $B$ defined over $F$. A \emph{standard} parabolic
subgroup is an $F$-parabolic subgroup of $G$ containing $B$. 
\begin{rem}
\label{rem:para-convention}As is common in the subject, in what follows,
a parabolic subgroup is always understood to be a\emph{ proper standard
}parabolic subgroup unless explicitly cautioned otherwise. For example,
a maximal parabolic subgroup means a standard maximal proper parabolic
subgroup. We refer to the standard Levi decomposition of a parabolic
subgroup as \emph{the }Levi decomposition (see I.1.4 of Moeglin-Waldspurger
\cite{MoeglinWaldspurger-book}).
\end{rem}

Let $\mathbb{A}$ be the adele ring of $F$. Let $X(G)$ be the lattice
of $F$-characters of $G$ and 
\[
G(\mathbb{A})^{1}=\bigcap_{\chi\in X(G)}\ker|\chi|
\]
Let $\mathbf{K}$ be the standard maximal compact subgroup of $G(\mathbb{A})$
(see I.1.4 of Moeglin-Waldspurger \cite{MoeglinWaldspurger-book}). 

\subsection{Constant terms and cuspforms}

For a function $f$ on $G(F)\backslash G(\mathbb{A})$ and a parabolic
subgroup $P$ with the unipotent radical $N$, the \emph{constant
term of $f$ along $P$} is 
\[
c_{P}f(g):=\int_{N(F)\backslash N(\mathbb{A})}f(ng)dn
\]
The space of \emph{cuspforms }is
\[
L_{0}^{2}(G(F)\backslash G(\mathbb{A})^{1})
\]
\[
=\left\{ f\in L^{2}(G(F)\backslash G(\mathbb{A})^{1}):c_{P}f=0\ \forall\text{ parabolic subgroups }P\neq G\right\} 
\]

The space $\mathcal{H}_{\mathbf{K}}:=C_{c}^{\infty}(G(\mathbb{A})//\mathbf{K})$
of $\mathbf{K}$-bi-invariant test functions on $G(\mathbb{A})$ is
a \emph{commutative }algebra under convolution. A basic result is
that the space $L_{0}^{2}(G(F)\backslash G(\mathbb{A})^{1})^{\mathbf{K}}$
of $\mathbf{K}$-invariant cuspforms decomposes discretely with respect
to $\mathcal{H}_{\mathbf{K}}$:
\[
L_{0}^{2}(G(F)\backslash G(\mathbb{A})^{1})^{\mathbf{K}}=\hat{\bigoplus_{\chi:\mathcal{H}_{\mathbf{K}}\to\mathbb{C}}}V_{\chi}
\]
where hat $\hat{\cdot}$ on the sum denotes completion and $\mathcal{H}_{\mathbf{K}}$
acts on $V_{\chi}$ by $\chi$. We note that $\dim V_{\chi}<\infty$
for each algebra homomorphism $\chi:\mathcal{H}_{\mathbf{K}}\to\mathbb{C}$.
We refer to a \emph{non-zero} eigenfunction of $\mathcal{H}_{\mathbf{K}}$
in $L_{0}^{2}(G(F)\backslash G(\mathbb{A})^{1})^{\mathbf{K}}$ as
a \emph{strong sense cuspform} on $G(\mathbb{A})$. 

\subsection{Maximal parabolic cuspidal Eisenstein series}

For the rest of this introduction, we assume that $G$ is \emph{semi-simple}
and $P=N\rtimes M$ is the Levi decomposition of a \emph{maximal }parabolic
subgroup $P$ of $G$ (see remark \ref{rem:para-convention}). Let
$\delta_{P}:G(\mathbb{A})\to(0,\infty)$ be the extension to $G(\mathbb{A})$
of the modular character on $P(\mathbb{A})$ using the Iwasawa decomposition
$G(\mathbb{A})=P(\mathbb{A})\mathbf{K}$. Let $\varpi$ be the fundamental
weight corresponding to $P$. We parameterize the space $\mathfrak{a}_{P}^{*}\otimes\mathbb{C}$
by $s\varpi$ for $s\in\mathbb{C}$. 

Let $\varphi$ be a strong sense cuspform on $M(\mathbb{A})$. We
denote the usual extension of $\varphi$ to $G(\mathbb{A})$ using
the Levi-Langlands decomposition 
\[
\varphi(g)=\varphi(m),\ g=nmak\in N(\mathbb{A})M(\mathbb{A})^{1}A_{M}^{+}\mathbf{K}
\]
also by the same letter $\varphi$. Let
\[
\varphi_{s}=\delta_{P}^{\frac{1}{2}}\cdot\varphi\cdot e^{\langle s\varpi,H_{P}(g)\rangle}\in C^{\infty}(G(\mathbb{A}))
\]
The \emph{Eisenstein series }made from \emph{cuspidal data }$\varphi$
is 
\[
E^{P}(s,\varphi,g):=\sum_{\gamma\in P(F)\backslash G(F)}\varphi_{s}(\gamma g)
\]
The sum converges for $\text{Re}(s)\gg0$. It has a meromorphic continuation
to $\mathbb{C}$ \cite{Lapid_Bernstein}.

Let $\chi:\mathcal{H}_{\mathbf{K}\cap M(\mathbb{A})}\to\mathbb{C}$
be the character corresponding to $\varphi$ and let $V_{\chi}$ be
the corresponding eigenspace. Further assume that the closure of the
space generated by $\varphi$ is an \emph{irreducible} cuspidal automorphic
representation $\pi$ of $M(\mathbb{A})$. The Eisenstein series $E^{P}(s,\varphi,g)$
satisfies a functional equation
\begin{equation}
E^{P}(s,\varphi,g)=c(s,\pi)E^{\overline{P}}(-s,\varphi',g)\label{eq:c(s,pi)}
\end{equation}
for some $\varphi'\in V_{\chi}$, where $\overline{P}=\overline{N}\rtimes M$
is the parabolic (non-standard) opposite to $P$. Langlands computed
$c(s,\pi)$ as a ratio of products of $L$-functions in \cite{Langlands-EulerProduct}. 

\subsection{\label{subsec:LanglandsComp}Langlands's computation}

In the Langlands dual group $^{L}G$, there is a corresponding parabolic
subgroup $^{L}P={}^{L}N\rtimes{}^{L}M$ with Levi $^{L}M$ and the
unipotent radical $^{L}N$. The maximal torus $A$ in the \emph{center}
of $^{L}M$ is one dimensional since $^{L}G$ is semi-simple and $^{L}P$
is a maximal parabolic subgroup of $^{L}G$. Consider the eigenspace
decomposition of $^{L}\mathfrak{n}$ under the adjoint action of $A$:
\[
^{L}\mathfrak{n}:=\text{Lie}({}^{L}N)=r_{1}\oplus\cdots\oplus r_{m}
\]
where 
\[
r_{j}=\bigoplus_{\alpha^{\vee}:\langle\varpi,\alpha^{\vee}\rangle=j}{}^{L}\mathfrak{g}_{\alpha^{\vee}}\quad j\ge1
\]
and the sum is over coroots $\alpha^{\vee}$ of $G$ for which $\langle\varpi,\alpha^{\vee}\rangle=j$,
that is if $\beta^{\vee}$ be the simple coroot corresponding to $^{L}P$,
then
\[
\alpha^{\vee}=\cdots+j\beta^{\vee}+\cdots
\]
when $\alpha^{\vee}$ is written as a sum of simple coroots. By a
theorem of Shahidi \cite{Shahidi-Auto-L-func-book}, each $r_{i}$
is an irreducible representation of $^{L}M$. 

Langlands \cite{Langlands-EulerProduct} expressed $c(s,\pi)$ in
equation \ref{eq:c(s,pi)} as a product of ratios of $L$-functions:

\[
c(s,\pi)=\prod_{j=1}^{m}\frac{L(js,\pi,r_{j})}{L(1+js,\pi,r_{j})}
\]
The largest possible $m$ occurs for the maximal parabolic subgroup
corresponding to the node with three neighbors in the Dynkin diagram
for $E_{8}$ when $m=6$. An illustrative example is when $G=\text{Sp}_{2n}$
and $P=N\rtimes M$ is the Siegel parabolic subgroup with Levi $M\simeq GL_{n}$.
In this case 
\[
c(s,\pi)=\frac{L(s,\pi)}{L(1+s,\pi)}\cdot\frac{L(2s,\pi,\wedge^{2})}{L(1+2s,\pi,\wedge^{2})},
\]
the Eisenstein series $E^{P}(g,\varphi,s)$ converges for $\Re(s)>\frac{n+1}{2}$,
and has a pole at $s=\frac{1}{2}$ if $L(s,\pi,\wedge^{2})$ has a
pole at $s=1$ \emph{and }$L(1/2,\pi)\neq0$ (see \cite{Lapid_Rallis_Annals}
for a further discussion of this example).

The case $G=G_{2}$ provided the extremely striking example of symmetric
cube $L$-functions attached to modular forms on the upper half-plane,
obtained \emph{without} recourse to Fourier coefficients. This computation
was a turning point in the theory of automorphic forms. The $L$-functions
appearing in these formulas for $c(s,\pi)$ have been thoroughly investigated
by Shahidi \cite{Shahidi_G2}\cite{Shahidi-Auto-L-func-book}. 

\subsection{Main theorem}

Let $P=N\rtimes M$ be a maximal parabolic subgroup of $G$. Take
$\varphi=1$ and let 
\[
\varphi_{s}(g)=\delta_{P}^{\frac{1}{2}}\cdot\varphi\cdot e^{\langle s\varpi,H_{P}(g)\rangle}=\delta_{P}^{\frac{1}{2}}\cdot e^{\langle s\varpi,H_{P}(g)\rangle}
\]
and 
\[
E^{P}(s,g):=\sum_{\gamma\in P(F)\backslash G(F)}\varphi_{s}(\gamma g)
\]
This is the simplest example of a \emph{non-cuspidal} Eisenstein series,
since $\varphi=1$ is \emph{not }a cuspform on $M(\mathbb{A})$, called
the \emph{unramified degenerate Eisenstein series attached to $P$.
}The map $s\mapsto E(s,\bullet)$ initially convergent for $\text{Re}(s)\gg0$
has a meromorphic continuation to $\mathbb{C}$ as a vector-valued
function with values in the space of smooth functions of uniform moderate
growth on $G(F)\backslash G(\mathbb{A})$ (see Bernstein-Lapid \cite{Lapid_Bernstein}).

In this paper, we obtain a polynomial $p\in\mathbb{C}[s]$ given in
terms of the structure of $P$, whose zeros capture the locations
and the orders of the poles of $E^{P}(s,g)$ in the region $\text{Re}(s)\ge0$.
Let $V_{r}$ be the $(r+1)$-dimensional irreducible representation
of $\mathfrak{sl}_{2}(\mathbb{C})$ given by the $r$-th symmetric
power of the standard representation. The notion of \emph{principal}
$\mathfrak{sl}_{2}\mathbb{C}$ for the Lie algebra of a split \emph{reductive}
group (with a fixed pinning) is defined in \ref{subsec:PrincipalSL2}
following Gross \cite{Gross_principal_SL2}. 
\begin{thm}
\label{thm:intro}Let $G$ be a split semi-simple linear algebraic
group over a number field and $P=N\rtimes M$ be the standard Levi
decomposition of a standard maximal parabolic subgroup $P$. Let 
\[
^{L}\mathfrak{n}=r_{1}\oplus r_{2}\oplus\cdots\oplus r_{m}
\]
where $r_{1},\dots,r_{m}$ are the irreducible constituents of the
adjoint representation of $^{L}M$ on $^{L}\mathfrak{n}$ as described
in \ref{subsec:LanglandsComp}. Let 
\[
r_{j}\simeq\bigoplus_{\ell\ge0}V_{\ell}^{m_{\ell}(j)},\quad V_{k}=\text{sym}^{k}(\text{std})
\]
be the decomposition of $r_{j}$ into irreducible constituents under
the action of the \textbf{principal} $\mathfrak{sl}_{2}\mathbb{C}\subset{}^{L}\mathfrak{m}$.
Let 
\[
p(s)=\prod_{j=1}^{m}\prod_{\ell\ge0}(js-1-\ell/2)^{m_{\ell}(j)}\ \in\mathbb{C}[s]
\]

In the region $\mathsf{Re}(s)\ge0$, $p(s)\cdot E^{P}(s,g)$ is holomorphic
and is a non-zero function on $G(\mathbb{A})$ for $\text{Re}(s)>0$. 

In other words, the order of the zeros of p(s) is the same as the
order of the poles of $E^{P}(s,g)$ in the region $\text{Re}(s)\ge0$. 
\end{thm}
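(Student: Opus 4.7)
The plan is to detect the poles of $E^{P}(s,g)$ via its constant term along the Borel subgroup $B$, and to recognize the resulting product of completed zeta values as being organized by the principal $\mathfrak{sl}_{2}\subset{}^{L}\mathfrak{m}$. Writing $\lambda=s\varpi-\rho_{M}$, so that $\lambda+\rho_{B}=s\varpi+\rho_{P}$ matches the inducing character of $\varphi_{s}$, one has the standard expansion
\[
c_{B}E^{P}(s,g)\;=\;\sum_{w\in W^{M}\backslash W}M_{w}(s)\,e^{\langle w\lambda+\rho_{B},H_{B}(g)\rangle},
\]
where $M_{w}(s)$ is the Gindikin--Karpelevich product of ratios $\xi(\langle\lambda,\alpha^{\vee}\rangle)/\xi(1+\langle\lambda,\alpha^{\vee}\rangle)$ over positive coroots $\alpha^{\vee}$ with $w\alpha<0$.

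The central calculation is the evaluation of $M_{w_{P}}(s)$ for the longest coset representative $w_{P}$, characterized by $w_{P}\Phi(N)=-\Phi(N)$. For $\alpha^{\vee}\in r_{j}$ one has $\langle\lambda,\alpha^{\vee}\rangle=js-\langle\rho_{M},\alpha^{\vee}\rangle$, and since the principal element $h=2\rho_{M}$ acts on $V_{\ell}$ with eigenvalues $-\ell,-\ell+2,\ldots,\ell$, the multiset $\{\langle\rho_{M},\alpha^{\vee}\rangle\}_{\alpha^{\vee}\in r_{j}}$ equals $\bigsqcup_{\ell}\{-\ell/2,-\ell/2+1,\ldots,\ell/2\}^{m_{\ell}(j)}$. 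Grouping the product defining $M_{w_P}(s)$ by $(j,\ell)$ and telescoping then yields
\[
M_{w_{P}}(s)\;=\;\prod_{j=1}^{m}\prod_{\ell\ge0}\left(\frac{\xi(js-\ell/2)}{\xi(js+\ell/2+1)}\right)^{m_{\ell}(j)}.
\]
On $\mathsf{Re}(s)\ge 0$, the numerator factor $\xi(js-\ell/2)$ has simple poles at $js=1+\ell/2$, accounting \emph{exactly} for the zeros of $p(s)$ with multiplicity $m_{\ell}(j)$, and additional apparent poles at $js=\ell/2$; the denominator $\xi(js+\ell/2+1)$ is non-vanishing on the closed half-plane and absorbs, when $\ell=0$, the apparent numerator pole at $s=0$.

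The main technical obstacle is to show that the remaining apparent poles at $js=\ell/2$ for $\ell\ge 1$ do not survive when one sums over the full Weyl expansion of $c_{B}E^{P}$. The other summands $M_{w}$ for $w\neq w_{P}$ are partial Gindikin--Karpelevich products that carry matching phantom contributions of opposite sign; the composition law for standard intertwining operators, combined with a case-by-case argument organized by the principal $\mathfrak{sl}_{2}$ decomposition, should produce the required pairwise cancellations and thereby establish holomorphy of $p(s)\cdot E^{P}(s,g)$ on $\mathsf{Re}(s)\ge 0$. For the non-vanishing statement on $\mathsf{Re}(s)>0$, the trivial summand $w=e$ of $c_{B}E^{P}$ contributes the bare exponential $e^{\langle\lambda+\rho_{B},H_{B}(g)\rangle}$, which is nowhere zero as a function of $g$ and carries no zeta factor: at any $s_{0}$ with $p(s_{0})\neq 0$ this yields $(p\cdot E^{P})(s_{0},g)\neq 0$ directly, while at a zero of $p$ of order $n$ the explicit residue of $M_{w_{P}}$ supplies a nonzero Laurent coefficient of matching order, completing the proof.
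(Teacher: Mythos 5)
Your telescoped formula for the long intertwining operator,
\[
M_{w_{P}}(s)\;=\;\prod_{j=1}^{m}\prod_{\ell\ge0}\left(\frac{\xi(js-\ell/2)}{\xi(js+\ell/2+1)}\right)^{m_{\ell}(j)},
\]
is correct and isolates the same Kostant-type combinatorics that drives the paper's argument. But the proof has a genuine gap exactly where you flag ``the main technical obstacle'': you never establish the cancellation of the spurious poles at $js=\ell/2$ ($\ell\ge1$), nor do you control the terms $M_{w}$ for $w\neq w_{P}$, whose Gindikin--Karpelevich denominators $\xi(1+\langle\lambda,\alpha^{\vee}\rangle)$ can enter the critical strip and contribute poles from the critical zeros of $\xi_{F}$. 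Worse, the spurious and genuine poles can collide, so they cannot be treated separately: for $G_{2}$ with $\Delta_{B}^{P}=\{\alpha\}$ one has $r_{1}\simeq V_{1}$, $r_{2}\simeq V_{0}$, $r_{3}\simeq V_{1}$, and at $s=\tfrac12$ your $M_{w_{P}}$ picks up a pole of order $3$ (from $\xi(s-\tfrac12)$, $\xi(2s)$, and $\xi(3s-\tfrac12)$ simultaneously), while $p(s)=(s-\tfrac32)(2s-1)(3s-\tfrac32)$ vanishes only to order $2$ there and the Eisenstein series has only a double pole. So the sum over $W^{M}\backslash W$ must cancel exactly one order of pole at a point where the exponentials $e^{\langle w\lambda+\rho_{B},H_{B}(g)\rangle}$ need not be linearly independent; your closing non-vanishing argument, which reads off the leading Laurent coefficient from $M_{w_{P}}$ alone, is therefore also unjustified, since the $w_{P}$-term does not dominate at such points.

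The paper routes around all of this by never expanding $c_{B}E^{P}$ directly. It realizes $E^{P}_{s\varpi}$ as the residue of the Borel Eisenstein series $E^{B}_{\Lambda}$ along $S_{P}$ (Proposition \ref{prop: residue}) and invokes two global facts about $E^{B}_{\Lambda}$: Jacquet's theorem that it has \emph{simple zeros} along every hyperplane $\langle\Lambda,\alpha^{\vee}\rangle=0$, and the cancellation of critical-zero contributions on $S_{P}$ (Proposition \ref{prop:franke}, proved via the injectivity of the principal nilpotent $X$ on negative $H$-eigenspaces). This packages every cross-term cancellation you would need into the single statement that $E^{B}_{\Lambda}=\prod_{\alpha}\frac{\langle\Lambda,\alpha^{\vee}\rangle}{\langle\Lambda,\alpha^{\vee}\rangle-1}\cdot E^{*}_{\Lambda}$ with $E^{*}$ holomorphic and non-vanishing in the relevant region; the restriction to $S_{P}$ then reduces the theorem to the elementary rational function $F(s)$, where the numerator factors $js+\ell/2$ coming from the \emph{zeros} of $E^{B}$ kill precisely your spurious poles, and the non-vanishing follows from the simplicity of those zeros. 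To salvage your approach you would need to supply an independent proof of these cancellations among the $M_{w}$ --- which amounts to reproving Jacquet's zero theorem --- so you should either import that result explicitly or adopt the residue route.
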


We give a simple example to illustrate the theorem. See Fulton-Harris
\cite{Fult_Har}, chapter 11, for a method to decompose a representation
of $\mathfrak{sl}_{2}\mathbb{C}$ (abstractly) into irreducible constituents. 
\begin{example}
Let $G=PGL_{n+1}$ ($n\ge2$) and $P=N\rtimes M$ be the maximal parabolic
subgroup corresponding to the ordered partition $(a+1,b+1)$ of $n+1$
with $a+b=n-1$ so that the derived group of the Levi subgroup $M$
is of type $A_{a}\times A_{b}$ (Dynkin diagram notation). The dual
group $^{L}G=\text{SL}_{n+1}(\mathbb{C})$. The principal $\mathfrak{sl}_{2}\mathbb{C}\subset{}^{L}\mathfrak{m}$
is given by the $\mathfrak{sl}_{2}$-triple $\{H,X,Y\}$ where the
neutral element is 
\[
H=\text{diag}(a,a-2,\dots,-a,b,b-2,\dots,-b)
\]
We may identify $^{L}\mathfrak{n}=r_{1}$ with $(a+1)\times(b+1)$-matrices.
We write $H$-eigenvalues of the corresponding coroot vectors in the
matrix
\[
\begin{bmatrix}a-b & (a-b)-2 & \cdots & (a+b)-2 & (a+b)\\
(a-b)-2 &  &  &  & (a+b)-2\\
\vdots &  & \iddots &  & \vdots\\
2-(a+b) &  &  &  & (b-a)+2\\
-(a+b) & 2-(a+b) & \cdots & (b-a)-2 & b-a
\end{bmatrix}
\]
Then
\[
r_{1}\simeq\bigoplus_{k=a+b-2\min\{a,b\}}^{k=a+b}V_{k},\quad k\text{ increments of }2
\]
Note that $a+b=n-1$. The Eisenstein series $E^{P}(s,g)$ converges
for $\text{Re}(s)>\frac{n+1}{2}$ and has simple poles at 
\[
\frac{n+1}{2},\frac{n+1}{2}-1,\dots,\frac{n+1}{2}-\min\{a,b\}
\]
in the region $\text{Re}(s)>0$. 
\end{example}

\begin{rem*}
This was first shown by Hanzer and Muic \cite{Hanzer_Muic} by a detailed
study of $c_{B}E_{s}^{P}(g)$ and observing cancellation among a sum
of intertwining operators. See also the work of Koecher \cite{Koecher}. 
\end{rem*}

\subsection{Applications of the poles of degenerate Eisenstein series}

Degenerate Eisenstein series (including the ramified ones) and their
poles are of interest for several applications. We mention some of
them to indicate some of the previous work on the poles of unramified
degenerate Eisenstein series. 

\subsubsection{Siegel-Weil formula}

The classical Siegel-Weil formula \cite{Weil_Acta64} identifies the
integral of a certain theta series as the special value of an Eisenstein
series. The automorphic forms appearing in the Laurent expansion at
these poles of degenerate Eisenstein series play a central role in
the regularized versions of the Siegel-Weil formula (see \cite{Kudla_Rallis_SiegelWeil},
\cite{Gan_secondTerm}). Kudla and Rallis determined the poles of
Siegel parabolic degenerate Eisenstein series. For an important recent
work, see Halawi and Segal \cite{halawi2023poles}. 

\subsubsection{Integral representation for $L$-functions}

Degenerate Eisenstein series are used to obtain integral representations
of some automorphic $L$-functions. The information about the poles
can be used to obtain results about the poles of these automorphic
$L$-functions (see \cite{Cogdell_RankinSelberg}, \cite{Kudla_Rallis}). 

\subsubsection{Arthur conjecture}

Let $G$ be a split simple adjoint group $G$ over a number field.
The maximal parabolic unramified degenerate Eisenstein series are
functions of one complex variable and the leading term of the Laurent
expansion at \emph{some }of these poles is square-integrable. These
are used to obtain unitary representations of the adele group $G(\mathbb{A})$
and the local constituents of these representations are unitary. This
method was used by Miller \cite{Miller_Annals} to verify Arthur's
conjecture that the spherical constituents of principal series representations
at certain points of reducibility are unitary. 

\subsubsection{Spectral decomposition}

Poles of unramified degenerate Eisenstein series play a central role
in understanding Langlands' work \cite{Langlands-Spectral} on the
spectral decomposition of automorphic forms. In this work, Langlands
notes that ``A number of unexpected and unwanted complications must
be taken into account...'' One such complication is the \emph{cancellation
of residues during the contour deformation}. In particular, the case
of $G_{2}$ was first obtained by Langlands in appendix III of \cite{Langlands-Spectral}.
A first step in understanding this difficult work is to determine
the poles of the unramified degenerate Eisenstein series, which we
do in this paper. For some recent work on spectral decomposition,
see \cite{Opdam_and_Others}\cite{Kazhdan_Okunkov}. 

\subsection{Outline of the paper}

We introduce the unramified degenerate Eisenstein series in section
\ref{sec:Degenerate-Eisenstein-Series}, after recalling some relevant
standard notation due to Arthur. 

We need Langlands' theorem that unramified degenerate Eisenstein series
occur as ``residues'' of the unramified Borel Eisenstein series.
In section \ref{sec:min-para}, we recall this result and the properties
of the Borel Eisenstein series we need. 

Section \ref{sec:Poles} is the heart of the paper. The line of argument
is made clear in the case of $\text{SL}_{3}$ at the beginning of
this section \ref{subsec:SL3}. The argument uses the zeros and the
poles of the Borel Eisenstein series and the residue formula of Langlands
to determine the poles of unramified degenerate Eisenstein series
in the positive half-plane $\text{Re}(s)\ge0$. 

In the sections that follow we do explicit computations for several
classical and exceptional groups. 

\subsection*{Acknowledgment}

I wish to thank Professors Jean-Loup Waldspurger and Erez Lapid for
pointing out some errors in a previous draft of this manuscript. I
wish to express my gratitude to Professor Bill Casselman for his support
during this project. His notes online, and a draft note on $G_{2}$,
were very helpful to me. I thank Professor Hugo Chapdelaine whose
careful critique considerably improved the exposition.

I owe an enormous debt of gratitude to my thesis advisor Professor
Paul B. Garrett who treated me as his son. Almost everything I know
about number theory, I learned it from him.

\section{\label{sec:Degenerate-Eisenstein-Series}Unramified degenerate Eisenstein
series}

In this section we recall the standard notation due to Arthur \cite{Arthur-trace}.
Even though we only deal with maximal parabolic subgroups, it is clarifying
to introduce the algebraic preliminaries for the non-maximal cases. 

Let $F$ be a number field, $\mathbb{A}$ be the ring of adeles of
$F$, and $|\cdot|$ denote the adele norm on $\mathbb{A}$. Let $\mathbf{G}_{a}$
be the additive group over $F$ and $\mathbf{G}_{m}=GL_{1}$ be the
multiplicative group over $F$. The group $\mathbf{G}_{m}(\mathbb{A})$
of ideles for $F$ is denoted $\mathbb{J}$. 

\subsection{Homomorphism $H_{G}$}

Let $G$ be a connected linear algebraic group defined over $F$,
not necessarily reductive. Let 
\[
X_{F}(G)=\text{Hom}_{F}(G,\mathbf{G}_{m})
\]
denote the abelian group of $F$-rational characters of $G$ and let
\[
\mathfrak{a}_{G}=\text{Hom}_{\mathbb{Z}}\left(X_{k}(G),\mathbb{R}\right)\quad\text{and}\quad\mathfrak{a}_{G}^{*}=X_{k}(G)\otimes_{\mathbb{Z}}\mathbb{R}
\]
These are vector spaces over $\mathbb{R}$ and there is a natural
pairing $\langle\cdot,\cdot\rangle:\mathfrak{a}_{G}^{*}\times\mathfrak{a}_{G}\to\mathbb{R}$.
The homomorphism 
\[
H_{G}:G(\mathbb{A})\to\mathfrak{a}_{G}
\]
is 
\[
H_{G}(x):=\left[\chi\mapsto\log|\chi(x)|\right]
\]
Let 
\[
G(\mathbb{A})^{1}=\ker H_{G}\subset G(\mathbb{A})
\]
The function $H_{G}$ is trivial on $G(k)$. For $G=N\rtimes L$ a
Levi decomposition of $G$, $H_{G}$ is trivial on $N(\mathbb{A})$
and $L_{\text{der}}(\mathbb{A})$, where $L_{\text{der}}$ is the
derived group of $L$.

\subsection{Assumptions on $G$}

In discussing Eisenstein series, it is convenient to assume that $G$
is semi-simple so that $\mathfrak{a}_{G}=0$. However, the data on
the Levi subgroups of the parabolic subgroups must be defined for
reductive groups. We therefore begin with a connected reductive group
$G$ split over $F$. From subsection \ref{subsec:DecomLambda} onwards,
we put further restriction that $G$ is semi-simple. 

Let $G$ be a connected and reductive algebraic group. Let $Z_{G}$
be the center of $G$. Let $G_{\mathbb{Q}}$ be the restriction of
scalars of $F$ to $\mathbb{Q}$ and $A_{G}^{+}$ be the connected
component of the group of real points of the maximal $\mathbb{Q}$-split
torus in the center of $G_{\mathbb{Q}}$. Then 
\[
A_{G}^{+}\subset Z_{G}(F\otimes\mathbb{R})\subset Z_{G}(\mathbb{A})\subset G(\mathbb{A})
\]
The map 
\[
H_{G}:A_{G}^{+}\to\mathfrak{a}_{G}
\]
is an isomorphism. For a parabolic subgroup $P=N\rtimes M$ (Levi
decomposition) of $G$, we observe that $X_{F}(P)=X_{F}(M)$ and hence
\[
\mathfrak{a}_{P}=\mathfrak{a}_{M}.
\]

\subsection{Roots and Coroots }

Let $G$ be a connected reductive group split over $F$. For the rest
of the paper, fix a maximal split torus $T\subset G$ with Lie algebra
$\mathfrak{t}$. Let 
\[
X_{F}(T)=\text{Hom}(T,\mathbf{G}_{m})\quad\text{and}\quad X_{F}^{\vee}(T)=\text{Hom}(\mathbf{G}_{m},T)
\]
There is a natural pairing 
\[
X_{F}(T)\times X_{F}^{\vee}(T)\to\mathbb{Z};\quad(\chi,\eta)\mapsto\langle\chi,\eta\rangle
\]
defined by 
\[
\chi\circ\eta(x)=x^{\langle\chi,\eta\rangle},\quad\forall x\in\mathbf{G}_{m}
\]
Let $C_{G}(T)$ and $N_{G}(T)$ denote the centralizer and the normalizer
of $T$ in $G$. The Weyl group of the pair $(G,T)$ is 
\[
W=W(G,T)=N_{G}(T)/C_{G}(T)
\]

The adjoint action $\text{Ad}$ of $T$ on $\mathfrak{g}=\text{Lie}(G)$
is diagonalizable and 
\[
\mathfrak{g}=\mathfrak{t}\oplus\left(\bigoplus_{\alpha\in\Phi}\mathfrak{g}_{\alpha}\right)
\]
where $\Phi\subset X_{F}(T)$ is the finite set of \emph{roots} and
\[
\mathfrak{g}_{\alpha}=\left\{ x\in\mathfrak{g}:\text{Ad}(t)\cdot x=\alpha(t)x\text{ for all }t\in T\right\} 
\]
are $T$-eigenspaces. The only rational multiples of $\alpha$ in
$\Phi$ are $\pm\alpha$. 

For each root $\alpha\in\Phi$, the subtorus $T_{\alpha}:=\left(\ker\alpha\right)^{\circ}$
of $T$ has codimension $1$, where $\circ$ means the connected component
of the identity. Then $G_{\alpha}:=C_{G}(T_{\alpha})$ is connected
and 
\[
\text{Lie}(G_{\alpha})=\mathfrak{t}\oplus\mathfrak{g}_{\alpha}\oplus\mathfrak{g}_{-\alpha}
\]
The Weyl group of the pair $(G_{\alpha},T)$ has order $2$, and embeds
in $W(G,T)$. Let $w_{\alpha}\in W(G,T)$ be the non-identity element
of $W(G_{\alpha},T)$; then $w_{\alpha}$ acts on $X_{F}(T)$ as 
\[
w_{\alpha}(\chi)=\chi-\langle\chi,\alpha^{\vee}\rangle\alpha
\]
for a unique \emph{coroot} $\alpha^{\vee}\in X_{F}^{\vee}(T)$. Since
$w_{\alpha}^{2}=1$, we must have $\langle\alpha,\alpha^{\vee}\rangle=2$.
See \cite{MacDonald_AlgebraicGroups}\cite{Springer_Corvallis}, for
example. 

\subsection{Root Groups and a pinning}

For $\alpha\in\Phi$, there is a unique algebraic subgroup $U_{\alpha}\simeq\mathbf{G}_{a}$
of $G$, called the \emph{root group} corresponding to $\alpha$,
which is normalized by $T$ and on which the adjoint action of $T$
is through the character $\alpha$. The Lie algebra of $U_{\alpha}$
is $\mathfrak{g}_{\alpha}$. 

We fix a Borel subgroup $B$ defined over $F$ for the rest of this
paper. The \emph{root datum} attached to $T\subset B\subset G$ is
the quadruple
\[
(X_{F}(T),\Delta_{B},X_{F}^{\vee}(T),\Delta_{B}^{\vee})
\]
A \emph{pinning} (or splitting \cite{Springer_Corvallis}, pg. 9)
of $G$ for $T\subset B\subset G$ is a collection of isomorphisms
\[
\left\{ e_{\alpha}:\mathbf{G}_{a}\to U_{\alpha}\ |\ \alpha\in\Delta_{B}\right\} 
\]

\subsection{Parabolic subgroups}

Any $F$-subgroup $P$ of $G$ containing $B$ is a \emph{standard
}parabolic subgroup (relative to $B$). A standard parabolic subgroup
$P$ has a unique \emph{standard} Levi decomposition $P=N_{P}\rtimes M_{P}$
where $M_{P}$ contains $T$. 
\begin{rem*}
Since we only consider standard parabolic subgroups relative to $B$
and standard Levi decompositions, we drop the adjective \emph{standard}
and refer to \emph{the} Levi decomposition. 
\end{rem*}
Observe that $X_{F}(P)=X_{F}(M_{P})$ and $\mathfrak{a}_{P}=\mathfrak{a}_{M_{P}}$.
For notational convenience, we write $A_{P}:=A_{M_{P}}$ and note
that $A_{P}$ is \emph{not} central in $P$ to avoid any potential
confusion. The action of $A_{P}$ on $\mathfrak{n}_{P}:=\text{Lie}N_{P}$
is diagonalizable and 
\[
\mathfrak{n}_{P}=\bigoplus_{\beta\in\Phi_{P}}\mathfrak{n}_{\beta}
\]
where $\Phi_{P}$ is a finite subset of $\mathfrak{a}_{P}^{*}$ and
\[
\mathfrak{n}_{\beta}=\left\{ X\in\mathfrak{n}_{P}:\text{Ad}(a)X=\beta(a)X,\ \forall a\in A_{P}\right\} 
\]
Note that $\Phi_{B}$ is a set of positive roots in $\Phi$, and let
$\Delta_{B}$ be the corresponding set of simple roots. 

For each parabolic subgroup, let $\Delta_{B}^{P}\subset\Delta_{B}$
denote the subset of $\alpha\in\Delta_{B}$ appearing in the action
of $T$ in the unipotent radical of $B\cap M_{P}$. The correspondence
$P\to\Delta_{B}^{P}$ is a bijection between the set of standard parabolic
subgroups of $G$ and the set of subsets of $\Delta_{B}$. 

Let $\Delta_{P}$ be the set of linear forms on $\mathfrak{a}_{P}$
obtained by the restriction of the elements in $\Delta_{B}-\Delta_{B}^{P}$.
Then $\Delta_{P}$ is in bijection with $\Delta_{B}-\Delta_{B}^{P}$,
and any root in $\Phi_{P}$ can be written uniquely as a nonnegative
integral linear combination of elements in $\Delta_{P}$. 

\subsection{\label{subsec:DecomLambda}Decompositions of $\mathfrak{a}_{B}$
and $\mathfrak{a}_{B}^{*}$ }

For the rest of this section, assume that $G$ is semi-simple so that
$\mathfrak{a}_{G}=0$. Let $P\supset B$ be a parabolic subgroup.
The inclusions
\[
A_{P}\subset A_{B}\subset M_{B}\subset M_{P}
\]
give canonical decompositions
\[
\mathfrak{a}_{B}=\mathfrak{a}_{P}\oplus\mathfrak{a}_{B}^{P},\quad\mathfrak{a}_{B}^{*}=\mathfrak{a}_{P}^{*}\oplus\left(\mathfrak{a}_{B}^{P}\right)^{*}
\]
For any $\Lambda\in\mathfrak{a}_{B}^{*}$ and $H\in\mathfrak{a}_{B}$,
we write 
\begin{equation}
\Lambda=\Lambda_{P}+\Lambda_{B}^{P}\quad\text{where }\Lambda_{P}\in\mathfrak{a}_{P}^{*},\ \Lambda_{B}^{P}\in\left(\mathfrak{a}_{B}^{P}\right)^{*}\label{eq:Lambda}
\end{equation}
and
\begin{equation}
H=H_{P}+H_{B}^{P}\quad\text{where }H_{P}\in\mathfrak{a}_{P},\ H_{B}^{P}\in\mathfrak{a}_{B}^{P}\label{eq:H}
\end{equation}

\subsection{Relation between $H_{B}$ and $H_{P}$}

Let $K$ be the standard special maximal compact subgroup which provides
the Iwasawa decomposition $G(\mathbb{A})=P(\mathbb{A})K(\mathbb{A})$
for any standard parabolic subgroup $P$. We extend $H_{P}$ from
$P_{\mathbb{A}}$ to $G_{\mathbb{A}}$ by
\[
H_{P}:G_{\mathbb{A}}\to\mathfrak{a}_{P},\quad H_{P}(pk)=H_{P}(p)\quad p\in P_{\mathbb{A}},k\in K_{\mathbb{A}}
\]
The decomposition $\mathfrak{a}_{B}=\mathfrak{a}_{P}\oplus\mathfrak{a}_{B}^{P}$
gives
\[
H_{B}(x)=H_{P}(x)+H_{B}^{P}(x)\quad\forall x\in G_{\mathbb{A}}
\]
and the two definitions---one by extension of $H_{P}$ to $G_{\mathbb{A}}$
and the other as the projection of $H_{B}$ to $\mathfrak{a}_{P}$---are
the same object. 

\subsection{Basis for $\mathfrak{a}_{P}^{*}$ and $\mathfrak{a}_{P}$}

Let $\widehat{\Delta}_{B}=\left\{ \varpi_{\alpha}:\alpha\in\Delta_{B}\right\} $
be the set of \emph{fundamental weights}, defined by 
\[
\langle\varpi_{\alpha},\beta^{\vee}\rangle=\delta_{\alpha\beta}\quad(\text{Kronecker delta})\ \forall\alpha,\beta\in\Delta_{B}
\]
Then 
\[
\widehat{\Delta}_{P}=\left\{ \varpi_{\alpha}:\alpha\in\Delta_{B}-\Delta_{B}^{P}\right\} 
\]
is a basis for $\mathfrak{a}_{P}^{*}$. Let 
\[
\Delta_{P}^{\vee}=\left\{ \alpha^{\vee}:\alpha\in\Delta_{P}\right\} 
\]
be the dual basis of $\widehat{\Delta}_{P}$. For $\alpha\in\Delta_{P}$,
let $\beta\in\Delta_{B}-\Delta_{B}^{P}$ be the simple root whose
restriction to $\mathfrak{a}_{P}$ is $\alpha$. Then $\alpha^{\vee}$
is the canonical projection of $\beta^{\vee}\in\mathfrak{a}_{B}$
onto $\mathfrak{a}_{P}$.

\subsection{Unramified degenerate Eisenstein series}

Let $P$ be a parabolic subgroup and let
\[
\rho_{P}=\frac{1}{2}\sum_{\alpha\in\Phi_{P}}(\dim\mathfrak{n}_{\alpha})\alpha
\]
This defines $\rho_{P}$ and $\rho_{B}$ and the notation is consistent
with equation \ref{eq:Lambda} giving $\rho_{B}=\rho_{P}+\rho_{B}^{P}$.
Let
\[
E_{\Lambda}^{P}(g)=\sum_{\gamma\in P_{k}\backslash G_{k}}e^{\langle\rho_{P}+\Lambda,H_{P}(\gamma g)\rangle},\quad\Lambda\in\mathfrak{a}_{P}^{*}\otimes\mathbb{C}.
\]
It converges absolutely and uniformly on compact subsets of $G_{\mathbb{A}}$
when $\Lambda$ is in the tube $\rho_{P}+T_{P}$, where
\[
T_{P}:=\left\{ \Lambda\in\mathfrak{a}_{P}^{*}\otimes\mathbb{C}:\Re\langle\Lambda,\alpha^{\vee}\rangle>0,\ \forall\alpha\in\Delta_{P}\right\} 
\]
is the \emph{tube} over the \emph{positive cone} $C_{P}:=T_{P}\cap\mathfrak{a}_{P}^{*}$.
We call this the \emph{positive tube }$T_{P}$ to simplify terminology.
The function $E_{\Lambda}^{P}(g)$ has a meromorphic continuation
to $\mathfrak{a}_{P}^{*}\otimes\mathbb{C}$ (see \cite{Lapid_Bernstein}).
For $P\neq B,G$, we call $E_{\Lambda}^{P}(g)$ \emph{the unramified
degenerate Eisenstein series attached to $P$. }

\subsection{A remark on terminology }

We do \emph{not} consider the ramified cases in this paper. We drop
the word \emph{unramified} to lighten the presentation, and all further
references to degenerate Eisenstein series shall be taken to mean
unramified degenerate Eisenstein series.

\section{\label{sec:min-para}Minimal parabolic Eisenstein series}

Throughout this section, let $G$ be an $F$\emph{-split} \emph{semi-simple}
linear algebraic group over $F$ with $T\subset B\subset G$ for a
maximal $F$-split torus contained in a Borel subgroup $B$ defined
over $F$. 

In general, \emph{cuspidal }Eisenstein series are more tractable than
general Eisenstein series in their analytic behavior since their constant
terms---from which many analytic properties of Eisenstein series
follow---are easier to compute. The main result of the paper provides
evidence that the poles of \emph{non-cuspidal} Eisenstein series occurring
in the spectral decomposition can be understood from the \emph{zeros
}and \emph{poles} of cuspidal Eisenstein series. 

\subsection{Unramified Borel Eisenstein series}

Let $B=N\rtimes M$ be the Levi decomposition of $B$. We write 
\[
a(g)^{\Lambda}:=e^{\langle\Lambda,H_{B}(g)\rangle}
\]
to simplify the notation. For $P=B$, we call the Eisenstein series
$E_{\Lambda}^{B}(g)$ \emph{the} minimal parabolic (or \emph{the }Borel)
Eisenstein series: 
\[
E_{\Lambda}^{B}(g)=\sum_{\gamma\in B_{k}\backslash G_{k}}a(g)^{\rho_{B}+\Lambda}\quad\Lambda\in T_{B},\ g\in G_{\mathbb{A}}
\]
The function $E_{\Lambda}^{B}(g)$ converges absolutely in the positive
tube $T_{B}$ and has a meromorphic continuation to $\mathfrak{a}_{B}^{*}\otimes\mathbb{C}$.
For $w\in W$, it satisfies the functional equation
\[
E_{\Lambda}^{B}(g)=c_{w,\Lambda}E_{w\cdot\Lambda}^{B}(g)
\]
where 
\[
c_{w,\Lambda}=\prod_{\substack{\alpha\in\Phi_{B}:\\
w\cdot\alpha<0
}
}\frac{\xi(\langle\Lambda,\alpha^{\vee}\rangle)}{\xi(1+\langle\Lambda,\alpha^{\vee}\rangle)}
\]
and 
\[
\xi(s):=\ensuremath{\xi_{F}(s)}\text{ is the completed zeta function of }F
\]
Unlike $P\neq B$, when $P=B$, we get a \emph{cuspidal }Eisenstein
series in a vacuous, but meaningful, sense since the trivial character
on $k^{\times}\backslash\mathbb{J}_{k}^{1}$ is a cuspform for $GL_{1}$. 

\subsection{Poles of $E_{\Lambda}^{B}(g)$ \label{subsec:Poles}}

The constant term $c_{B}E_{\Lambda}^{B}$ of $E_{\Lambda}^{B}$ along
$B$ is
\[
c_{B}E_{\Lambda}^{B}(g):=\int_{N(F)\backslash N(\mathbb{A})}E_{\Lambda}^{B}(ng)dn
\]
It is a function on $N(\mathbb{A})M(F)\backslash G(\mathbb{A})$.
It was first computed by Gelfand et al. \cite{Gelfand_ICM} (page
82): 
\[
c_{B}E_{\Lambda}^{B}(g)=\sum_{w\in W}c_{w,\Lambda}a(g)^{\rho_{B}+w\cdot\Lambda}
\]

From the above formula, we see that the singularities of $c_{B}E_{\Lambda}^{B}(g)$
are hyperplanes of the form
\[
S(\alpha,c):=\left\{ \Lambda\in\mathfrak{a}_{B}^{*}\otimes\mathbb{C}:\langle\Lambda,\alpha^{\vee}\rangle=c\right\} 
\]
where $\alpha\in\Phi_{B}$ and $c\in\mathbb{C}$. Following Langlands,
we say that the singularity along $S(\alpha,c)$ is \emph{real} if
$c\in\mathbb{R}$. In the positive tube $T_{B}$, the singularities
of $E_{\Lambda}^{B}(g)$ and $c_{B}E_{\Lambda}^{B}$ are the same,
are real, and are given by 
\begin{equation}
S_{\gamma}:=S(\gamma,1)\quad\text{(}\gamma\in\Phi_{B}\text{)}\label{eq:S_alpha}
\end{equation}
These singularities are \emph{simple} in the sense that
\[
\Lambda\mapsto\prod_{\gamma\in\Phi_{B}}\left(\langle\Lambda,\gamma^{\vee}\rangle-1\right)E_{\Lambda}^{B}(g)
\]
extends to a holomorphic function on the positive tube $T_{B}$. 

\subsection{Zeros of $E_{\Lambda}^{B}(g)$}

\subsubsection{The $SL_{2}$ Eisenstein series}

As a prelude to the more general case, consider the $\text{SL}_{2}$
Eisenstein series $E_{s}(z)$ in the introduction. Its constant term
is
\[
\int_{0}^{1}E_{s}(x+iy)dx=y^{\frac{1}{2}+\frac{s}{2}}+c(s)y^{\frac{1}{2}-\frac{s}{2}},
\]
The constant term vanishes at $s_{0}$ as a function of $y$ only
if 
\[
c(s_{0})=-y^{s_{0}}\quad\text{for all }y>0
\]
This can happen only for $s_{0}=0$ and if 
\[
\lim_{s\to0}c(s)=-1.
\]
This is indeed true, since for any number field $F$, the corresponding
completed zeta function $\xi:=\xi_{F}$ satisfies
\[
\lim_{s\to0}\frac{\xi(s)}{\xi(s+1)}=\frac{\mathsf{res}_{s=0}\xi(s)}{\mathsf{res}_{s=1}\xi(s)}=-1.
\]
Further, the zero of $E_{s}(z)$ at $0$ is simple.

\subsubsection{The general case}

We say that $\Lambda\in\mathfrak{a}_{B}^{*}\otimes\mathbb{C}$ is
\emph{regular }if $\Lambda$ is not fixed by any $w\in W$. For regular
$\Lambda\in\mathfrak{a}_{B}^{*}\otimes\mathbb{C}$, the set 
\[
\left\{ a(g)^{w\cdot\Lambda}:w\in W\right\} 
\]
is a linearly independent set of functions on $G(\mathbb{A})$ and
\[
c_{B}E_{\Lambda}^{B}(g)=0\iff\sum_{w\in W}c_{w,\Lambda}a(g)^{w\cdot\Lambda}
\]
\[
\iff c_{w,\Lambda}=0\ \text{for all }w\in W
\]
Since $c_{1,\Lambda}=1$, we conclude that the Eisenstein series $E_{\Lambda}^{B}\neq0$
for regular $\Lambda\in\mathfrak{a}_{B}^{*}\otimes\mathbb{C}$. Let
\[
H_{\alpha}:=\left\{ \Lambda\in\mathfrak{a}_{B}^{*}\otimes\mathbb{C}:\langle\Lambda,\alpha^{\vee}\rangle=0\right\} ,\quad\alpha\in\Phi_{B}
\]
Note that every $\Lambda$ in the complement of $\cup_{\alpha\in\Phi_{B}}H_{\alpha}$
in $\mathfrak{a}_{B}^{*}\otimes\mathbb{C}$ is regular. The set of
regular elements is an open dense subset of $\mathfrak{a}_{B}^{*}\otimes\mathbb{C}$. 

The following result of Jacquet \cite{Jacquet_residual_GL_n} computes
all possible zeros of the Borel Eisenstein series $E_{\Lambda}^{B}$. 
\begin{prop}
The Eisenstein series $E_{\Lambda}^{B}(g)$ has a simple zero along
the hyperplanes $H_{\alpha}$ for $\alpha\in\Phi_{B}$. 
\end{prop}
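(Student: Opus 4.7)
The plan is to exploit the functional equation $E_{\Lambda}^{B}=c_{w,\Lambda}E_{w\Lambda}^{B}$ applied to the reflection $w_{\alpha}$, which fixes $\Lambda$ whenever $\langle\Lambda,\alpha^{\vee}\rangle=0$. I would first reduce to the case of a \emph{simple} root by a standard Weyl-group transport: given any $\alpha\in\Phi_{B}$, choose $w\in W$ with $w\alpha=\beta\in\Delta_{B}$. Then $w^{-1}H_{\beta}=H_{\alpha}$ (since $\langle w\Lambda,\beta^{\vee}\rangle=\langle\Lambda,\alpha^{\vee}\rangle$), and the functional equation transports the order of vanishing of $E^{B}_{\mu}$ along $H_{\beta}$ to that of $E^{B}_{\Lambda}$ along $H_{\alpha}$, once one checks that the Gindikin--Karpelevich factor $c_{w,\Lambda}$ is generically holomorphic and nonvanishing on $H_{\alpha}$—which holds because its singular hyperplanes are of the form $S(\gamma,c)$ with $(\gamma,c)\neq(\alpha,0)$ at a generic point of $H_{\alpha}$.

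Assuming henceforth that $\alpha$ is simple, the functional equation reads
\[
E_{\Lambda}^{B}(g)\;=\;\frac{\xi(\langle\Lambda,\alpha^{\vee}\rangle)}{\xi(1+\langle\Lambda,\alpha^{\vee}\rangle)}\,E_{w_{\alpha}\Lambda}^{B}(g).
\]
On $H_{\alpha}$ we have $w_{\alpha}\Lambda=\Lambda$, so $E_{w_{\alpha}\Lambda}^{B}=E_{\Lambda}^{B}$; and the ratio of $\xi$-factors tends to $-1$ as $s=\langle\Lambda,\alpha^{\vee}\rangle\to 0$, by the same residue computation $\lim_{s\to 0}\xi(s)/\xi(s+1)=\mathsf{res}_{s=0}\xi/\mathsf{res}_{s=1}\xi=-1$ that was already used in \ref{subsec:Poles}. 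Taking the limit along a dense set of regular points of $H_{\alpha}$ gives $E_{\Lambda}^{B}=-E_{\Lambda}^{B}$, hence $E_{\Lambda}^{B}\equiv 0$ on $H_{\alpha}$.

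To promote this vanishing to a \emph{simple} zero, I would inspect the constant-term expansion $c_{B}E_{\Lambda}^{B}(g)=\sum_{w\in W}c_{w,\Lambda}\,a(g)^{\rho_{B}+w\Lambda}$, grouping each $w$ with $ww_{\alpha}$. On $H_{\alpha}$ the two exponentials coincide, and the cocycle identity $c_{ww_{\alpha},\Lambda}=c_{w,w_{\alpha}\Lambda}\,c_{w_{\alpha},\Lambda}$ (when lengths add) combined with $c_{w_{\alpha},\Lambda}|_{H_{\alpha}}=-1$ shows that the sum of coefficients in each pair vanishes to first order in $s$. Writing the Taylor expansion $\xi(s)/\xi(s+1)=-1+\kappa s+O(s^{2})$ with $\kappa\neq 0$, the first-order coefficient of $c_{B}E_{\Lambda}^{B}$ in $s$ is a nontrivial linear combination of the linearly independent exponentials $a(g)^{\rho_{B}+w\Lambda}$ (nontrivial because the identity Weyl element contributes $c_{1,\Lambda}=1$, whose paired partner $c_{w_{\alpha},\Lambda}$ yields a nonzero $\kappa$ at first order). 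Hence $c_{B}E^{B}_{\Lambda}$ vanishes to exactly order $1$ on $H_{\alpha}$, which forces $E_{\Lambda}^{B}$ itself to vanish to exactly order $1$ there, since the constant term detects the order of vanishing of a holomorphic family of Borel Eisenstein series.

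The main obstacle is precisely this last simplicity step: the vanishing from the functional equation alone gives \emph{some} zero, but not its multiplicity. Isolating the exact order requires the pair-by-pair cancellation argument in the constant term together with the nonvanishing of the first Taylor coefficient $\kappa$ of $\xi(s)/\xi(s+1)$ at $s=0$; the latter is a nontrivial analytic input about $\xi_{F}$, but can be made explicit from the functional equation $\xi(s)=\xi(1-s)$ and the simple-pole structure at $s=0,1$.
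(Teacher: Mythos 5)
Your overall strategy coincides with the paper's (Jacquet's) proof: transport to a simple root via the Weyl group after checking that $c_{w,\Lambda}$ is finite and nonvanishing along $H_{\beta}$, then pair $w$ with $ww_{\alpha}$ and use the cocycle relation together with $\lim_{s\to0}\xi(s)/\xi(1+s)=-1$. Your derivation of the \emph{vanishing} via the functional equation $E^{B}_{\Lambda}=c_{w_{\alpha},\Lambda}E^{B}_{w_{\alpha}\Lambda}$, giving $E^{B}_{\Lambda}=-E^{B}_{\Lambda}$ at generic points of $H_{\alpha}$, is a clean repackaging of the same pairing and is fine.

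The gap is in the simplicity step. You locate the nonvanishing of the first-order term in the coefficient $\kappa$ of the expansion $\xi(s)/\xi(1+s)=-1+\kappa s+O(s^{2})$, and you assert that $\kappa\neq0$ follows from the functional equation and the simple poles at $s=0,1$. It does not: writing $\xi_{F}(s)=r_{-1}s^{-1}+r_{0}+\cdots$ at $s=0$, one finds $\kappa=-2r_{0}/r_{-1}$, which depends on the \emph{constant} Laurent coefficient $r_{0}$, a quantity not controlled by the residues or by $\xi(s)=\xi(1-s)$. So as written your argument would collapse if $r_{0}=0$. Moreover the input is unnecessary, because you have dropped the other first-order contribution: on $H_{\alpha}$ the two exponentials in a pair coincide, but their transverse derivatives do not. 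For the pair $\dot w=1$, with $s=\langle\Lambda,\alpha^{\vee}\rangle$ and $w_{\alpha}\Lambda=\Lambda-s\alpha$,
\begin{equation*}
a(g)^{\rho_{B}+\Lambda}+c_{w_{\alpha},\Lambda}\,a(g)^{\rho_{B}+w_{\alpha}\Lambda}
= s\,a(g)^{\rho_{B}+\Lambda}\left(\langle\alpha,H_{B}(g)\rangle+\kappa\right)+O(s^{2}),
\end{equation*}
and $\langle\alpha,H_{B}(g)\rangle+\kappa$ cannot vanish identically in $g$ whatever the constant $\kappa$ is, since $\langle\alpha,H_{B}(g)\rangle$ is a nonconstant function on $G(\mathbb{A})$. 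This is exactly the mechanism the paper invokes (it is the analogue of the $SL_{2}$ computation, where the derivative of $y^{(1-s)/2}$ produces the $\log y$ term that forces a simple zero at $s=0$). A secondary imprecision: for $\Lambda\in H_{\alpha}$ the full set $\left\{ a(g)^{\rho_{B}+w\Lambda}:w\in W\right\}$ is \emph{not} linearly independent, since $a^{w\Lambda}=a^{ww_{\alpha}\Lambda}$ there; the independence you need is over representatives $\dot w\in W/W_{\alpha}$, applied to the first-order coefficients of each pair as above.
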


\begin{proof}
(Jacquet) From the general theory of Eisenstein series, it is enough
to show that for generic $\Lambda\in H_{\alpha}$ 
\[
c_{B}E_{\Lambda}^{B}(g)=0
\]
for each $\alpha\in\Phi_{B}$ and that the zero along $H_{\alpha}$
is simple. We first prove this for a simple root $\alpha\in\Delta_{B}$
and deduce the case when $\alpha$ is not simple using the functional
equations of $E_{\Lambda}^{B}(g)$. 

\textbf{Step 1: }$\alpha$ simple. Let $w_{\alpha}\in W$ be the reflection
corresponding to a simple root $\alpha\in\Delta_{B}$. The group $W_{\alpha}=\{1,w_{\alpha}\}$
acts on $W$ on the right with orbits of the form $\{w,w\cdot w_{\alpha}\}$
for $w\in W$. Using 
\[
c_{ww_{\alpha},\Lambda}=c_{w,w_{\alpha}\Lambda}\cdot c_{w_{\alpha},\Lambda}\quad\text{(cocycle relation)},
\]
\[
w_{\alpha}\cdot\Lambda=\Lambda\quad\text{on }H_{\alpha},
\]
and 
\[
\lim_{\langle\Lambda,\alpha^{\vee}\rangle\to0}c_{w_{\alpha},\Lambda}=\lim_{\langle\Lambda,\alpha^{\vee}\rangle\to0}\frac{\xi(\langle\Lambda,\alpha^{\vee}\rangle)}{\xi(1+\langle\Lambda,\alpha^{\vee}\rangle)}=-1,
\]
we get 
\[
\lim_{\langle\Lambda,\alpha^{\vee}\rangle\to0}\left(c_{w,\Lambda}a(g)^{\rho_{B}+w\cdot\Lambda}+c_{ww_{\alpha},\Lambda}a(g)^{\rho_{B}+ww_{\alpha}\cdot\Lambda}\right)
\]
\[
=\lim_{\langle\Lambda,\alpha^{\vee}\rangle\to0}\left(c_{w,\Lambda}a(g)^{\rho_{B}+w\cdot\Lambda}+c_{w,w_{\alpha}\cdot\Lambda}c_{w_{\alpha},\Lambda}a(g)^{\rho_{B}+ww_{\alpha}\cdot\Lambda}\right)=0
\]
away from the singularities of $c_{w,\Lambda}$. Breaking up the following
sum over $W$ by the orbits of the $W_{\alpha}$ action, 
\[
\lim_{\langle\Lambda,\alpha^{\vee}\rangle\to0}c_{B}E_{\Lambda}^{B}(g)=\lim_{\langle\Lambda,\alpha^{\vee}\rangle\to0}\sum_{w\in W}c_{w,\Lambda}a(g)^{\rho_{B}+w\cdot\Lambda}
\]
\[
=\sum_{\dot{w}\in W/W_{\alpha}}\lim_{\langle\Lambda,\alpha^{\vee}\rangle\to0}\left(c_{\dot{w},\Lambda}a(g)^{\rho_{B}+\dot{w}\cdot\Lambda}+c_{\dot{w}w_{\alpha},\Lambda}a(g)^{\rho_{B}+\dot{w}w_{\alpha}\cdot\Lambda}\right)=0
\]
For generic $\Lambda\in H_{\alpha}$, the set $\{a^{\dot{w}\Lambda}:\dot{w}\in W/W_{\alpha}\}$
is a linearly independent set of functions on $G(\mathbb{A})$. The
simplicity of the zero along $H_{\alpha}$ follows from the observation
that
\[
\lim_{\langle\Lambda,\alpha^{\vee}\rangle\to0}\frac{1}{\langle\Lambda,\alpha^{\vee}\rangle}\left(a(g)^{\rho_{B}+\dot{w}\cdot\Lambda}+c_{w_{\alpha},\Lambda}a(g)^{\rho_{B}+w_{\alpha}\cdot\Lambda}\right)\neq0
\]
for generic $\Lambda\in H_{\alpha}$ and that the term corresponding
to $\dot{w}=1$ is non-zero. The non-vanishing above is similar to
the fact that the $\text{SL}_{2}$ Eisenstein series $E_{s}(z)$ has
a simple zero at $s=0$. 

\textbf{Step 2. }$\alpha$ non-simple. Now we prove the vanishing
for a general positive root. Given a positive root $\beta\in\Phi_{B}$,
there exists $w\in W$ and a simple root $\alpha\in\Delta_{B}$ such
that $w\cdot\beta=\alpha$. We have the functional equation
\[
E_{\Lambda}^{B}(g)=c_{w,\Lambda}E_{w\cdot\Lambda}^{B}(g),
\]
Since $w\cdot\beta=\alpha>0$, the formula 
\[
c_{w,\Lambda}=\prod_{\substack{\gamma\in\Phi_{B}:\\
w\cdot\gamma<0
}
}\frac{\xi(\langle\Lambda,\gamma^{\vee}\rangle)}{\xi(1+\langle\Lambda,\gamma^{\vee}\rangle)}
\]
shows that $H_{\beta}$ is not a singular hyperplane of $c_{w,\Lambda}$.
Since 
\[
\lim_{s\to0}\frac{\xi(s)}{\xi(s+1)}\neq0,
\]
it follows that $c_{w,\Lambda}$ does not vanish along $H_{\beta}$.
Using $w\cdot H_{\beta}=H_{\alpha}$, we conclude that $c_{B}E_{\Lambda}^{B}(g)$
and $E_{\Lambda}^{B}(g)$ have simple zeros along $H_{\beta}$. 
\end{proof}

\subsection{$E_{\Lambda_{P}}^{P}$ as a residue of $E_{\Lambda}^{B}$}

The result of this subsection is the well-known theorem of Langlands
that the non-cuspidal Eisenstein series occurring in the spectral
decomposition of automorphic forms are ``residues'' of cuspidal
Eisenstein series (see Moeglin \cite{Moeglin_ICM}). The general notion
of residue required to prove this result is discussed in chapter 7
of Langlands \cite{Langlands-Spectral} and section V.1 of Moeglin-Waldspurger
\cite{MoeglinWaldspurger-book}. 

The case we need is the simplest and occurs without any of the complications
of the general case (see Langlands \cite{Langlands_Boulder,Langlands_Volume}).
For a parabolic subgroup $P$ of $G$, the set
\[
S_{P}:=\bigcap_{\alpha\in\Delta_{B}^{P}}S_{\alpha}\quad\text{(see }\text{\ref{eq:S_alpha} for the definition of }S_{\alpha})
\]
is an affine subspace of $\mathfrak{a}_{B}^{*}\otimes\mathbb{C}$.
The function 
\[
\prod_{\alpha\in\Delta_{B}^{P}}\left(\langle\Lambda,\alpha^{\vee}\rangle-1\right)\cdot E_{\Lambda}^{B}
\]
extends to a meromorphic function on $S_{P}$. The \emph{residue}
of $E_{\bullet}^{B}$ along $S_{P}$ is 
\[
\left(\mathsf{Res}_{S_{P}}E_{\bullet}^{B}\right):=\left(\prod_{\alpha\in\Delta_{B}^{P}}\left(\langle\Lambda,\alpha^{\vee}\rangle-1\right)\cdot E_{\Lambda}^{B}\right)\bigg|_{S_{P}}
\]
It is a meromorphic function on $S_{P}=\rho_{B}^{P}+\mathfrak{a}_{P}^{*}\otimes\mathbb{C}$. 

The following well-known result shows how the degenerate Eisenstein
series occur as residues of the minimal parabolic Eisenstein series. 
\begin{prop}
(Langlands) \label{prop: residue}For the decomposition \ref{subsec:DecomLambda}
\[
\Lambda=\Lambda_{P}+\Lambda_{B}^{P},
\]
we have 
\[
\left(\mathsf{Res}_{S_{P}}E_{\bullet}^{B}\right)(\rho_{B}^{P}+\Lambda_{P})=c\cdot E_{\Lambda_{P}}^{P}
\]
for some $c\neq0$. 
\end{prop}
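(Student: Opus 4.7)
The plan is to realize the Borel Eisenstein series on $G$ as \emph{induction in stages} through $P$: I will rewrite $E^B_\Lambda(g)$ as an outer sum over $P(F)\backslash G(F)$ whose summand is $e^{\langle\rho_P+\Lambda_P,\,H_P(\gamma g)\rangle}$ times the $M_P$-Borel Eisenstein series evaluated at the $M_P$-Iwasawa component of $\gamma g$; taking the residue along $S_P$ then amounts to taking the residue of the inner $M_P$-Borel Eisenstein series at its ``trivial representation'' point $\rho_B^P$, leaving the outer sum $E^P_{\Lambda_P}(g)$ times a scalar.

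Concretely, I use the fibration $B(F)\backslash G(F) \twoheadrightarrow P(F)\backslash G(F)$ with fibers $(B\cap M_P)(F)\backslash M_P(F)$ to break the defining sum of $E^B_\Lambda(g)$ into an iterated sum. For $\delta\in M_P(F)$ and the $P$-Iwasawa decomposition $\gamma g = n_P m_P k$, one computes, using $\rho_B=\rho_P+\rho_B^P$, $\Lambda=\Lambda_P+\Lambda_B^P$, the fact that $H_P$ is a homomorphism on $M_P(\mathbb{A})$ vanishing on $M_P(F)$, and that $M_P$ normalizes $N_P$, that
\[
a(\delta\gamma g)^{\rho_B+\Lambda} \;=\; e^{\langle \rho_P+\Lambda_P,\,H_P(\gamma g)\rangle}\cdot e^{\langle \rho_B^P+\Lambda_B^P,\,H_B^P(\delta m_P)\rangle}.
\]
Summing over $\delta$ yields the Borel Eisenstein series $E^{B\cap M_P,\,M_P}_{\Lambda_B^P}(m_P)$ on $M_P$, and all rearrangements are licit for $\Lambda$ in the positive tube $T_B$. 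Since $\alpha^\vee\in\mathfrak{a}_B^P$ for $\alpha\in\Delta_B^P$, the normalizing factor $\prod_{\alpha\in\Delta_B^P}(\langle\Lambda,\alpha^\vee\rangle-1)$ depends only on $\Lambda_B^P$, and the condition $\Lambda\in S_P$ is precisely $\Lambda_B^P=\rho_B^P$. Pulling this factor inside the (still absolutely convergent) outer sum and letting $\Lambda_B^P\to\rho_B^P$ with $\Lambda_P\in T_P$, I obtain
\[
\bigl(\mathsf{Res}_{S_P}E^B_\bullet\bigr)(\rho_B^P+\Lambda_P)(g)\;=\;c(m_P)\cdot E^P_{\Lambda_P}(g),
\]
where $c(m_P)$ is \emph{a priori} a function on $M_P(F)\backslash M_P(\mathbb{A})$, namely the residue of the $M_P$-Borel Eisenstein series at $\rho_B^P$.

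The main obstacle is to show that $c$ is a \emph{non-zero constant}, not merely a non-zero function. I will verify this by feeding the Gelfand formula of \S\ref{subsec:Poles} into the constant term $c_{B\cap M_P}E^{B\cap M_P,\,M_P}_\Lambda$. Among the $|W_{M_P}|$ terms of that sum, only the longest element $w_0\in W_{M_P}$ produces simple poles at all $|\Delta_B^P|$ hyperplanes $\langle\Lambda,\alpha^\vee\rangle=1$ ($\alpha\in\Delta_B^P$) simultaneously, which is exactly what the normalization absorbs; the contribution of $w_0$ to the constant term is proportional to $a(\cdot)^{\rho_B^P+w_0\rho_B^P}=a(\cdot)^{0}=1$ because $w_0\rho_B^P=-\rho_B^P$, so $c$ is a constant. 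Non-vanishing then reduces to $\mathsf{res}_{s=1}\xi(s)\neq 0$ together with $\xi(n)\neq 0$ for the integer values $n=\langle\rho_B^P,\alpha^\vee\rangle\ge 2$ occurring at non-simple positive roots $\alpha$ of $M_P$. The careful bookkeeping of these simple poles and zeros---entirely parallel to the $SL_2$ residue computation recalled earlier in this section---is the real content of the argument.
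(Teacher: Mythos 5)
The paper does not actually prove this proposition: it is quoted from Langlands (\cite{Langlands_Boulder,Langlands_Volume}) as a known result, so there is no in-text argument to compare yours against line by line. Your strategy is the standard one: induction in stages through $P$, the factorization $a(\delta\gamma g)^{\rho_B+\Lambda}=e^{\langle\rho_P+\Lambda_P,H_P(\gamma g)\rangle}\,e^{\langle\rho_B^P+\Lambda_B^P,H_B^P(\delta m_P)\rangle}$ is correct, the identification of $S_P$ with $\Lambda_B^P=\rho_B^P$ is correct, and so is the observation that in the Gelfand formula for the inner $M_P$-series only the longest element $w_0\in W_{M_P}$ survives the iterated residue, contributing $a^{\rho_B^P+w_0\rho_B^P}=a^0$ times a nonzero ratio of residues and values of $\xi$ at integers $\ge 2$.

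The genuine gap is the final inference ``the constant term along $B\cap M_P$ of the residue is the constant $c$, hence the residue itself is the constant $c$.'' An automorphic form on $M_P(F)\backslash M_P(\mathbb{A})^1$ whose constant term along the Borel is constant can still differ from a constant by a cusp form of $M_P$, or by a contribution supported on an intermediate cuspidal datum; the Borel constant term alone does not determine the form. To close this you need that the residue has cuspidal support concentrated on $(T,\mathbf{1})$: pairing $E^{B\cap M_P}_{\Lambda_B^P}$ against any cusp form of $M_P$ or of a proper Levi of $M_P$ other than $T$ gives zero by unfolding in the range of convergence, this persists under meromorphic continuation and under taking residues, and then the standard lemma that an automorphic form all of whose cuspidal components vanish is zero, applied to the difference between the residue and $c$, yields constancy. (Equivalently, invoke square-integrability of the iterated residue at $\rho_B^P$ and identify the resulting $K$-fixed, trivially-Hecke-eigen residual form with the constant.) A secondary point worth flagging: $S_P$ lies on the boundary of the tube where the outer sum over $P(F)\backslash G(F)$ converges, so interchanging the residue limit with that infinite sum requires a locally uniform bound on the normalized inner Eisenstein series; calling the outer sum ``still absolutely convergent'' at $\Lambda_B^P=\rho_B^P$ presupposes exactly the estimate that has to be supplied.
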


\section{\label{sec:Poles}Poles of maximal parabolic degenerate Eisenstein
series}

Let $T\subset B\subset G$ be as before for a connected semi-simple
algebraic group $G$. Let $P$ be a maximal $F$-parabolic subgroup
$P\supset B$. We have $\Delta-\Delta_{B}^{P}=\{\beta\}$ for some
simple root $\beta\in\Delta_{B}$. Let $\varpi\in\mathfrak{a}_{B}^{*}$
be the fundamental weight dual vector to the coroot $\beta^{\vee}$. 

The vector $\varpi\in\mathfrak{a}_{P}^{*}$ and we parametrize $\mathfrak{a}_{P}^{*}\otimes\mathbb{C}$
by $s\varpi$. Let 
\[
E_{s\varpi}^{P}(g)=\sum_{\gamma\in P(F)\backslash G(F)}e^{\langle\rho_{P}+s\varpi,H_{P}(g)\rangle}
\]
It converges for $\text{Re}(s)\gg0$ and has a meromorphic continuation
to $\mathbb{C}$. 

In this section, we show that the poles of $E_{s\varpi}^{P}(g)$ in
the region $\text{Re}(s)\ge0$ are determined by the zeros of a polynomial
$p\in\mathbb{C}[s]$ obtained using the structure of $P$. Before
we treat the general case, we discuss the simplest example of $SL_{3}$
which highlights the issues we must address.

\subsection{\label{subsec:SL3}The $SL_{3}$ example}

The poles of $E_{\Lambda}^{B}$ \emph{outside the positive tube} can
come from the \emph{critical zeros} of $\xi:=\xi_{F}$ the completed
zeta function of the number field $F$, as can be seen from the formula
\[
c_{B}E_{\Lambda}^{B}(g)=\sum_{w\in W}c_{w,\Lambda}a(g)^{\rho_{B}+w\cdot\Lambda},\quad c_{w,\Lambda}=\prod_{\substack{\alpha>0:\\
w\cdot\alpha<0
}
}\frac{\xi(\langle\Lambda,\alpha^{\vee}\rangle)}{\xi(1+\langle\Lambda,\alpha^{\vee}\rangle)}
\]
Casselman \cite{Casselman_Eis_conj} brought attention to a curious
phenomena where the poles from the critical zeros do not contribute
to the poles meeting the positive tube of the degenerate Eisenstein
series $E_{\Lambda_{P}}^{P}$, even though the point $\rho_{B}^{P}\in S_{P}\subset\mathfrak{a}_{B}^{*}\otimes\mathbb{C}$
is outside the positive cone. Following Casselman, we illustrate this
phenomenon for $SL_{3}$. 

Let $G=SL_{3}$. Let $\alpha,\beta\in\Delta_{B}$ be the simple roots.
Let 
\[
s_{\alpha}=\langle\Lambda,\alpha^{\vee}\rangle\quad\text{and}\quad s_{\beta}=\langle\Lambda,\beta^{\vee}\rangle
\]

\begin{example}
For $w\in W$ the \emph{longest} Weyl element, 
\[
c_{w,\Lambda}=\frac{\xi(s_{\alpha})\xi(s_{\beta})\xi(s_{\alpha}+s_{\beta})}{\xi(1+s_{\alpha})\xi(1+s_{\beta})\xi(1+s_{\alpha}+s_{\beta})}.
\]
For $\Delta_{B}^{P}=\{\alpha\}$, we have $\rho_{B}^{P}=\frac{1}{2}\alpha$,
$\langle\rho_{B}^{P},\beta^{\vee}\rangle=-\frac{1}{2}$, $\langle\varpi,\alpha^{\vee}\rangle=0$,
and $\langle\varpi,\beta^{\vee}\rangle=1$. 

For $\Lambda=\rho_{B}^{P}+s\varpi$, we have 
\[
s_{\beta}=\langle\Lambda,\beta^{\vee}\rangle=\langle\rho_{B}^{P}+s\varpi,\beta^{\vee}\rangle=-\frac{1}{2}+s
\]
and the term $\xi(1+s_{\beta})$ in the denominator of $c_{w,\Lambda}$
could contribute poles from the \emph{critical zeros} of $\xi(s)$
to $E_{s\varpi}^{P}(g)$ in the region $\text{Re}(s)\ge0$.

However, 
\[
\mathsf{Res}_{S_{P}}c_{w,\Lambda}=\frac{\mathsf{Res}_{z=1}\xi(z)}{\xi(2)}\cdot\frac{\xi(s_{\beta})\cancel{\xi(1+s_{\beta})}}{\cancel{\xi(1+s_{\beta})}\xi(2+s_{\beta})}
\]
and the troublesome term is cancelled. 
\end{example}

The reader should consult Casselman's \cite{Casselman_Eis_conj} account
for further examples of cancellations of this sort. In fact, this
paper is the inspiration to all the ideas in this paper. This cancellation
is \emph{not }sufficient to determine the poles of degenerate Eisenstein
series, not even for $SL_{3}$. The following example illustrates
what is going on. 
\begin{example}
The poles and zeros of $E_{\Lambda}^{B}(g)$ relevant for determining
the poles of degenerate Eisenstein series in our region of interest
are captured by the meromorphic function 
\[
F(\Lambda):=\frac{s_{\alpha}}{s_{\alpha}-1}\cdot\frac{s_{\beta}}{s_{\beta}-1}\cdot\frac{s_{\alpha}+s_{\beta}}{s_{\alpha}+s_{\beta}-1}
\]
Note that when $s_{\alpha}=1$, we can have a pole at 
\[
(s_{\alpha},s_{\beta})=(1,1)\text{ and }(1,0).
\]
However, 
\[
\mathsf{Res}_{S_{\alpha}}F(\Lambda)=(s_{\alpha}-1)\frac{s_{\alpha}}{s_{\alpha}-1}\cdot\frac{s_{\beta}}{s_{\beta}-1}\cdot\frac{s_{\alpha}+s_{\beta}}{s_{\alpha}+s_{\beta}-1}\bigg|_{S_{\alpha}}
\]
\[
=\frac{\cancel{s_{\beta}}}{s_{\beta}-1}\cdot\frac{1+s_{\beta}}{\cancel{s_{\beta}}}\bigg|_{S_{\alpha}}=\frac{1+s_{\beta}}{s_{\beta}-1}\bigg|_{S_{\alpha}}
\]
The cancellation above explains why the degenerate Eisenstein series
obtained by taking residue along $S_{\alpha}$ ($\alpha$ a simple
root) is \emph{holomorphic} at the point $S_{\alpha}\cap S_{\gamma}$,
where $\gamma$ is the non-simple positive root. 
\end{example}

This simple observation is sufficient to obtain both the locations
and the order of the poles. To explicate the above cancellations,
we need the principal $\mathfrak{sl}_{2}\mathbb{C}$ subalgebras of
$^{L}\mathfrak{g}$ discussed below. In subsections \ref{subsec:PrincipalSL2}
and \ref{subsec:ImpLemma}, we drop the hypothesis that $G$ be semi-simple. 

\subsection{\label{subsec:PrincipalSL2}Principal homomorphism $\text{SL}_{2}\mathbb{C}\to{}^{L}G$ }

For a connected reductive group $G$ split over $k$ with Lie algebra
$\mathfrak{g}$, its root datum consists of a maximal torus $T\subset B$
and the quadruple
\[
(X_{k}(T),\Delta_{B},X_{k}^{\vee}(T),\Delta_{B}^{\vee})
\]
A \emph{pinning/splitting} of $G$ for $T\subset B\subset G$ is a
collection of isomorphisms 
\[
\left\{ e_{\alpha}:\mathbf{G}_{a}\to U_{\alpha}\ |\ \alpha\in\Delta_{B}\right\} 
\]
The construction of the \emph{dual group }$^{L}G$ of $G$ gives a
$k$-split torus $^{L}T$, a Borel $^{L}B\supset{}^{L}T$, and the
root datum 
\[
\left(X_{k}({}^{L}T)=X_{k}^{\vee}(T),\ \Delta_{^{L}B}\simeq\Delta_{B}^{\vee},\ X_{k}^{\vee}({}^{L}T)=X_{k}(T),\ \Delta_{^{L}B}^{\vee}\simeq\Delta_{B}\right)
\]
and root vectors 
\[
\left\{ e_{\alpha^{\vee}}:\mathbf{G}_{a}\to U_{\alpha^{\vee}}\ |\ \alpha\in\Delta_{B}\right\} 
\]
We have an identification between the positive roots of $^{L}B$ in
$\text{Hom}({}^{L}T,\mathbf{G}_{m})$ and the positive coroots for
$B$ in $\text{Hom}(\mathbf{G}_{m},T)$ (see Springer \cite{Springer_Corvallis}). 

Let $\Delta:=\Delta_{B}$, $\Phi^{+}:=\Phi_{B}$, and $^{L}\mathfrak{g}:=\text{Lie}(^{L}G)$.
Let
\[
x_{\alpha^{\vee}}:=\text{Lie}(e_{\alpha^{\vee}})(1)\quad\text{in }{}^{L}\mathfrak{g}_{\alpha^{\vee}}=\text{Lie}(U_{\alpha^{\vee}})
\]
and
\[
X:=\sum_{\alpha\in\Delta}x_{\alpha^{\vee}}
\]
Then $X$ is a principal nilpotent element in $^{L}\mathfrak{g}=\text{Lie}(^{L}G)$
(sometimes also referred to as regular nilpotent element). For each
$\alpha^{\vee}\in\Phi^{\vee}$, let $h_{\alpha^{\vee}}\in{}^{L}\mathfrak{g}$
be the vector determined by the coroot $\alpha^{\vee}:\mathbb{G}_{m}\to T$,
and let 
\[
H=\sum_{\gamma\in\Phi^{+}}h_{\gamma^{\vee}}=\sum_{\alpha\in\Delta}c_{\alpha}h_{\alpha^{\vee}}
\]
The coefficients $c_{\alpha}$ are \emph{positive} integers. Finally,
for each $\alpha\in\Delta$, let $y_{\alpha^{\vee}}$ be the unique
basis of $\text{Lie}(U_{-\alpha^{\vee}})$ such that 
\[
[x_{\alpha^{\vee}},y_{\alpha^{\vee}}]=h_{\alpha^{\vee}}
\]
Let 
\[
Y=\sum_{\alpha\in\Delta}c_{\alpha}y_{\alpha^{\vee}}
\]
A simple calculation shows that $\{H,X,Y\}$ is a standard $\mathfrak{sl}_{2}$-triple:
\[
[H,X]=2X,\quad[H,Y]=-2Y,\quad[X,Y]=H
\]
There is a homomorphism 
\[
\phi:\mathfrak{sl}_{2}(\mathbb{C})\to{}^{L}\mathfrak{g}
\]
given by
\[
\left(\begin{matrix}1 & 0\\
0 & -1
\end{matrix}\right)\mapsto H,\quad\left(\begin{matrix}0 & 1\\
0 & 0
\end{matrix}\right)\mapsto X,\quad\left(\begin{matrix}0 & 0\\
1 & 0
\end{matrix}\right)\mapsto Y
\]
and since $SL_{2}(\mathbb{C})$ is simply connected, we have a homomorphism
of reductive groups $\varphi:SL_{2}(\mathbb{C})\to{}^{L}G$. We refer
to it as \emph{the principal homomorphism} $SL_{2}\to{}^{L}G$. The
co-character $\mathbb{G}_{m}\to{}^{L}T$ given by the restriction
of $\varphi$ to the maximal torus 
\[
\mathbf{G}_{m}\simeq\left\{ \left(\begin{matrix}t & 0\\
0 & t^{-1}
\end{matrix}\right)\subset SL_{2}:t\in\mathbf{G}_{m}\right\} 
\]
is $2\rho_{B}$ in $\text{Hom}(\mathbb{G}_{m},{}^{L}T)=\text{Hom}(T,\mathbb{G}_{m})$
(see Gross \cite{Gross_principal_SL2}). 

\subsection{\label{subsec:ImpLemma}The action of the principal $\mathfrak{sl}_{2}\mathbb{C}$
in $\mathfrak{m}$ on $\mathfrak{n}$}

Let $G$ be a split connected reductive group and $P=N\rtimes M$
be the standard Levi decomposition of a parabolic subgroup $P$. To
explicate in general the cancellations illustrated in \ref{subsec:SL3}
for $SL_{3}$, we need to study the action of the principal $SL_{2}$
in $M$ on $\mathfrak{n}$. We first discuss an example. 
\begin{example}
\label{exa:GL4}Let $G=GL_{4}$ with standard Levi $M=GL_{2}\times GL_{2}$
of the standard parabolic $P$. The space $\mathfrak{n}$ is the space
of $2\times2$ matrices (abelian Lie algebra). Let $\alpha_{1},\alpha_{2},\alpha_{3}$
be the standard numbering for the simple roots of the standard maximal
torus of $G$. 

For $i,j\in\{1,2\}$, denote $E_{ij}$ the matrix whose $(i',j')$-coefficient
is $1$ if $(i',j')=(i,j)$ and $0$ if $(i',j')\neq(i,j)$. The lines
$\mathbb{C}E_{ij}$ are eigenspaces for the action of the standard
torus of $G$, associated to the root $\alpha_{2}$ for $(i,j)=(2,1)$,
$\alpha_{1}+\alpha_{2}$ for $(i,j)=(1,1)$, $\alpha_{2}+\alpha_{3}$
for $(i,j)=(2,2)$, and $\alpha_{1}+\alpha_{2}+\alpha_{3}$ for $(i,j)=(1,2).$

The principal $SL_{2}$ is the diagonal embedding of $SL_{2}\to GL_{2}\times GL_{2}$
and $SL_{2}$ acts by conjugation on $\mathfrak{n}$. This representation
decomposes as 
\[
\mathfrak{n}\simeq V_{0}\oplus V_{2},\quad V_{k}\simeq\text{Sym}^{k}(\text{std})
\]
where 
\[
V_{0}=\mathbb{C}(E_{11}+E_{22})\quad\text{and}\quad V_{2}=\mathbb{C}E_{21}\oplus\mathbb{C}(E_{11}-E_{22})\oplus\mathbb{C}E_{12}
\]
Note that $V_{0}$ is \emph{not} a root space. 
\end{example}

The following simple observation is critical in the proof of proposition
\ref{prop:franke}.
\begin{lem}
\label{lem:X-action}Let $P=N\rtimes M$ be the standard Levi decomposition
of a standard proper parabolic subgroup $P$ of $G$. Let $X$ be
the principal nilpotent element in $\mathfrak{m}$ and $\alpha\in\Phi_{B}$
be a positive root with $\mathfrak{g}_{\alpha}\subset\mathfrak{n}$.
Then $X\cdot\mathfrak{g}_{\alpha}\subset\bigoplus_{\theta\in\Delta_{B}^{P}}\mathfrak{g}_{\alpha+\theta}$.
In particular, if $X\cdot\mathfrak{g}_{\alpha}\neq0$ then there exists
$\theta\in\Delta_{B}^{P}$ such that $\alpha+\theta\in\Phi_{B}$. 
\end{lem}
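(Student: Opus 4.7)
The plan is to unpack the definition of the principal nilpotent in $\mathfrak{m}$ and then reduce the statement to the standard root-string calculus in $\mathfrak{g}$. The construction in \ref{subsec:PrincipalSL2}, applied to $M$ with its pinning inherited from the pinning of $G$, shows that the simple roots of $M$ (with respect to $T \subset B \cap M \subset M$) are precisely the elements of $\Delta_B^P$. Consequently, the principal nilpotent $X \in \mathfrak{m}$ is a linear combination
\[
X = \sum_{\theta \in \Delta_B^P} c_\theta\, x_\theta,
\]
where each $x_\theta \in \mathfrak{g}_\theta$ is a nonzero simple-root vector for $M$ and $c_\theta$ is a positive integer coming from the principal $\mathfrak{sl}_2$-triple.

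From here the lemma is essentially a bookkeeping exercise. For every $\theta \in \Delta_B^P$ and every positive root $\alpha \in \Phi_B$ with $\mathfrak{g}_\alpha \subset \mathfrak{n}$, the Jacobi identity and the root-space decomposition give
\[
[x_\theta, \mathfrak{g}_\alpha] \subset \mathfrak{g}_{\alpha + \theta},
\]
with the standard convention that $\mathfrak{g}_{\alpha + \theta} = 0$ unless $\alpha + \theta$ is an actual root of $G$. Note that $\alpha + \theta \neq 0$, since $\alpha$ and $\theta$ are both positive. Summing over $\theta \in \Delta_B^P$ yields the desired containment
\[
X \cdot \mathfrak{g}_\alpha \;\subset\; \bigoplus_{\theta \in \Delta_B^P} \mathfrak{g}_{\alpha + \theta}.
\]

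The ``in particular'' statement is immediate from this direct-sum decomposition: if $X \cdot \mathfrak{g}_\alpha \neq 0$, then at least one summand $[x_\theta, \mathfrak{g}_\alpha]$ must be nonzero for some $\theta \in \Delta_B^P$, and this forces $\mathfrak{g}_{\alpha+\theta} \neq 0$, i.e. $\alpha + \theta$ is a root of $G$. Being a sum of positive roots, it lies in $\Phi_B$.

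There is no genuine obstacle here --- the content is entirely the identification of the principal nilpotent of the Levi with a sum of $M$-simple root vectors, combined with the fact that simple roots of $M$ are exactly $\Delta_B^P$. The only point deserving mild care is to keep the two root systems (of $G$ and of $M$) straight and to use that $\alpha \in \Phi_B$ with $\mathfrak{g}_\alpha \subset \mathfrak{n}$ means $\alpha$ is a positive root of $G$ that is \emph{not} a root of $M$, which is however not needed in the present argument.
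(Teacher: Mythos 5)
Your proof is correct and follows essentially the same route as the paper: write the principal nilpotent $X$ of $\mathfrak{m}$ as a combination of simple root vectors $x_\theta$ for $\theta\in\Delta_B^P$ and use $[\mathfrak{g}_\theta,\mathfrak{g}_\alpha]\subset\mathfrak{g}_{\alpha+\theta}$. The extra care you take in identifying the simple roots of $M$ with $\Delta_B^P$ is implicit in the paper's one-line argument; the precise coefficients in $X$ (which are in fact all equal to $1$ in the construction of \ref{subsec:PrincipalSL2}) are immaterial to the containment.
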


\begin{proof}
Let $\Theta=\Delta_{B}^{P}$ for notational convenience. We have $X=\sum_{\theta\in\Theta}x_{\theta}$
and 
\[
X\cdot x_{\alpha}=\sum_{\theta\in\Theta}[x_{\theta},x_{\alpha}]=\sum_{\theta\in\Theta}a_{\theta}x_{\alpha+\theta}\subset\bigoplus_{\theta\in\Theta}\mathfrak{g}_{\alpha+\theta}
\]
for some constants $a_{\theta}$ for $\theta\in\Theta$. 
\end{proof}

The action of the principal $SL_{2}$ in $M$ commutes with the central
torus $A_{P}\subset M$. For 
\[
\mathfrak{n}=\bigoplus_{\alpha\in\Phi_{P}}\mathfrak{n}_{\alpha}
\]
we get an action of the principal $SL_{2}$ in $M$ on each $\mathfrak{n}_{\alpha}$
($\alpha\in\Phi_{P}$). This action plays a central role in the arguments
below.

\subsection{No contribution from the critical zeros of $\xi$}

We first make a comment about the poles of residues. Let $f$ be a
meromorphic function on a complex vector space whose singularities
are a locally finite collection $\{L_{\mu}:\mu\in I\}$ ($I$ is an
indexing set) of affine hyperplanes. Assume that the singularity along
a hyperplane $L$ is \emph{simple} so that the notion of residue is
straightforward. The residue of $f$ along $L$ is a meromorphic function
on $L$ and can only have singularities along $L\cap L_{\mu}$ for
$\mu\in I$ such that $L_{\mu}\neq L$. 

We need the following result of Kostant \cite{Kostant_Principal3D}.
\begin{prop}
(Kostant) Let $G$ be a connected split reductive group over $k$.
The list of numbers, with multiplicities, in 
\[
\left\{ \langle\rho_{B},\alpha^{\vee}\rangle+1:\alpha\in\Phi_{B}\right\} 
\]
is the same as the list of numbers, with multiplicities, in 
\[
\left\{ \langle\rho_{B},\alpha^{\vee}\rangle:\alpha\in\Phi_{B}-\Delta_{B}\right\} 
\]
together with positive integers $a_{1},\dots,a_{n}\ge2$ where $n$
is the cardinality of $\Delta_{B}$. 
\end{prop}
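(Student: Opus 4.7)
The plan is to reduce the statement directly to Kostant's theorem on the decomposition of $^L\mathfrak{g}$ under the principal $\mathfrak{sl}_2\mathbb{C}$, which was set up in subsection \ref{subsec:PrincipalSL2}. The key input, supplying the integers $n$, $m_i$, and ultimately the $a_i$, is Kostant's result that the adjoint representation of the principal $\mathfrak{sl}_2$ decomposes $^L\mathfrak{g}$ as $\bigoplus_{i=1}^{n} V_{2m_i}$, where $n = |\Delta_B|$ and the positive integers $m_1, \ldots, m_n \geq 1$ are the exponents of $G$.

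First I would observe that the neutral element $H$ of the principal $\mathfrak{sl}_2$-triple comes from the cocharacter $2\rho_B \in \mathrm{Hom}(\mathbf{G}_m, {}^LT)$, as recalled at the end of subsection \ref{subsec:PrincipalSL2}. Consequently, the $H$-eigenvalue on the coroot space $^L\mathfrak{g}_{\alpha^\vee}$ (for $\alpha \in \Phi_B$) is $\langle 2\rho_B, \alpha^\vee\rangle = 2\langle \rho_B, \alpha^\vee\rangle$. Since $V_{2m_i}$ has $H$-eigenvalues $-2m_i, -2m_i + 2, \ldots, 2m_i$, and the $n$ zero eigenvalues in $^L\mathfrak{g}$ are accounted for by the Cartan $^L\mathfrak{t}$, reading off the positive eigenvalues on both sides of Kostant's decomposition yields the multiset identity
\[
\{\langle \rho_B, \alpha^\vee\rangle : \alpha \in \Phi_B\} = \bigsqcup_{i=1}^{n}\{1, 2, \ldots, m_i\}.
\]

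From here the rest is bookkeeping. Since $\rho_B$ is the sum of fundamental weights, $\langle \rho_B, \alpha^\vee\rangle = 1$ precisely when $\alpha \in \Delta_B$, so the $n$ copies of $1$ in the right-hand multiset correspond bijectively to the simple roots. Removing them yields
\[
\{\langle \rho_B, \alpha^\vee\rangle : \alpha \in \Phi_B - \Delta_B\} = \bigsqcup_{i=1}^{n}\{2, 3, \ldots, m_i\},
\]
while shifting the previous identity by $+1$ gives
\[
\{\langle \rho_B, \alpha^\vee\rangle + 1 : \alpha \in \Phi_B\} = \bigsqcup_{i=1}^{n}\{2, 3, \ldots, m_i + 1\}.
\]
Taking the multiset difference of the last two displayed lines leaves exactly $\{m_1 + 1, \ldots, m_n + 1\}$, and since $m_i \geq 1$ the integers $a_i := m_i + 1 \geq 2$ have the required property. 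The main (and only) substantive obstacle is invoking Kostant's principal-$\mathfrak{sl}_2$ decomposition; everything else is a direct translation between $H$-eigenvalues and coroot heights.
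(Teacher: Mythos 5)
Your derivation is correct: the paper itself gives no proof of this proposition (it is quoted from Kostant's paper on the principal three-dimensional subalgebra), and your reduction to the decomposition $^{L}\mathfrak{g}_{ss}\simeq\bigoplus_{i=1}^{n}V_{2m_{i}}$ under the principal $\mathfrak{sl}_{2}\mathbb{C}$, using that the $H$-eigenvalue on $^{L}\mathfrak{g}_{\alpha^{\vee}}$ is $2\langle\rho_{B},\alpha^{\vee}\rangle$ (i.e.\ twice the height of the coroot), is exactly the standard argument underlying the cited result. Your identification $a_{i}=m_{i}+1$, the degrees of the Weyl group, is moreover consistent with the paper's subsequent remark that the $a_{i}$ are determined by the Poincar\'e polynomial of $W$.
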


\begin{rem*}
The numbers $a_{1},\dots,a_{n}$ can be explicitly determined in terms
of the Poincar\'e polynomial of the Weyl group of $G$ (see Humphreys
\cite{Humphreys_CoxeterGroups}, chapters 1 and 3). However, we do
not need this. 
\end{rem*}
For the remainder of this section, we assume that $G$ is semi-simple
and we fix a \emph{maximal }parabolic subgroup $P\supset B$. Let
$\varpi\in\mathfrak{a}_{P}^{*}$ be the fundamental weight corresponding
to $P$. The standard Levi decomposition $P=N\rtimes M$ gives 
\[
\Phi_{B}=\Phi_{M}^{+}\sqcup\Phi_{N}
\]
where $\Phi_{M}^{+}$ and $\Phi_{N}$ are the roots with root groups
in $M\cap B$ and $N$ respectively. In particular, $\Delta_{B}^{P}\subset\Phi_{M}^{+}$.
Let $\Lambda=\Lambda_{P}+\Lambda_{B}^{P}\in\mathfrak{a}_{B}^{*}\otimes\mathbb{C}$
as in subsection \ref{subsec:DecomLambda}. 
\begin{prop}
\label{prop:franke}Let $\Lambda_{P}=s\varpi$ and $\rho_{B}^{P}+s\varpi$
for $s\in\mathbb{C}$ be a parametrization of $S_{P}=\rho_{B}^{P}+\mathfrak{a}_{P}^{*}\otimes\mathbb{C}$.
The poles of the Eisenstein series $E_{\varpi s}^{P}(g)$ in the region
$\mathsf{Re}(s)\ge0$ are real and contained in the set 
\[
\left\{ S_{P}\cap S_{\alpha}:\alpha\in\Phi_{N}\right\} 
\]
\end{prop}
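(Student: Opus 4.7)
By Langlands' residue formula (Proposition \ref{prop: residue}), $E_{s\varpi}^P(g)$ equals a non-zero constant times $\mathsf{Res}_{S_P} E_\bullet^B$ evaluated at $\Lambda = \rho_B^P + s\varpi$. Hence the plan is to identify the singularities of this residue on $S_P$ in $\mathsf{Re}(s) \geq 0$ via the constant-term formula $c_B E_\Lambda^B = \sum_{w \in W} c_{w, \Lambda}\, a(g)^{\rho_B + w\Lambda}$, where $c_{w, \Lambda} = \prod_{\gamma > 0,\, w\gamma < 0} \xi(s_\gamma)/\xi(1 + s_\gamma)$ with $s_\gamma := \langle \Lambda, \gamma^\vee\rangle$.

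First I would partition $\Phi_B = \Phi_M^+ \sqcup \Phi_N$. For $\gamma \in \Phi_M^+$, the coroot $\gamma^\vee$ lies in the $\mathbb{Z}$-span of $\{\delta^\vee : \delta \in \Delta_B^P\}$, so $\langle \varpi, \gamma^\vee\rangle = 0$ and $s_\gamma = \langle \rho_B^P, \gamma^\vee\rangle$ is a constant positive integer on $S_P$, equal to $1$ exactly when $\gamma \in \Delta_B^P$ (those poles of $\xi(s_\gamma)$ are absorbed by the residue operator $\prod_{\delta \in \Delta_B^P}(s_\delta - 1)$) and $\geq 2$ otherwise (no pole in the numerator, no zero in the denominator). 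Thus the only $s$-dependent $\xi$-factors of $\mathsf{Res}_{S_P} c_{w, \Lambda}$ come from $\gamma \in \Phi_N$, and for each such $\gamma$ the numerator $\xi(s_\gamma)$ gives precisely the real pole at $S_P \cap S_\gamma$ asserted in the proposition.

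The central obstruction is to rule out non-real poles coming from critical zeros of $\xi$ via the denominator factors $\xi(1 + s_\gamma)$ for $\gamma \in \Phi_N$; such a contribution occurs in $\mathsf{Re}(s) \geq 0$ exactly when $\langle\rho_B^P, \gamma^\vee\rangle \leq -1/2$. Following the cancellation pattern already visible in the $SL_3$ analysis of subsection \ref{subsec:SL3}, I would pair each such problematic denominator $\xi(1 + s_\gamma)$ with a numerator factor $\xi(s_{\gamma'})$ for a root $\gamma' \in \Phi_B$ for which $s_{\gamma'} \equiv 1 + s_\gamma$ holds identically on $S_P$. Matching the $\varpi$- and $\rho_B^P$-components forces $\gamma'^\vee = \gamma^\vee + \delta^\vee$ for some $\delta \in \Delta_B^P$; such shifts of root spaces are exactly those produced by the action of the principal nilpotent $X \in {}^L\mathfrak{m}$ on $^L\mathfrak{g}_{\gamma^\vee}$ by Lemma \ref{lem:X-action}, and Kostant's multiset identity furnishes the combinatorial bookkeeping guaranteeing that every problematic denominator is so paired.

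The hardest part will be making this pairing work uniformly over all $w \in W$: the index set $\{\gamma > 0 : w\gamma < 0\}$ in the product for $c_{w, \Lambda}$ varies with $w$, and one must verify that the pairing $\gamma \leftrightarrow \gamma'$ either restricts to this set term-by-term or that residual unpaired contributions combine to a holomorphic function after summing over $w$ against the linearly-independent characters $a(g)^{\rho_B + w\Lambda}$. Once the pairing is established, every surviving denominator in each $\mathsf{Res}_{S_P} c_{w, \Lambda}$ is evaluated away from the zeros of $\xi$ throughout $\mathsf{Re}(s) \geq 0$, leaving only the real poles along $S_P \cap S_\alpha$ with $\alpha \in \Phi_N$, which is the assertion of the proposition.
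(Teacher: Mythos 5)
Your proposal follows the paper's strategy almost exactly (residue formula, constant term, split $\Phi_B=\Phi_M^{+}\sqcup\Phi_N$, absorb the $\Phi_M^{+}$ factors, then cancel each dangerous denominator $\xi(1+s_\gamma)$ against a numerator $\xi(s_{\gamma^{\prime}})$ with $\gamma^{\prime\vee}=\gamma^{\vee}+\theta^{\vee}$, $\theta\in\Delta_B^P$), but it leaves two genuine gaps. First, the step you flag as ``the hardest part'' --- uniformity of the pairing over $w$ --- is not actually hard, but you do need the argument: only those $w$ with $w\cdot\Delta_B^P\subset-\Phi_B$ survive the residue along $S_P$ (so in particular all of $\Phi_M^{+}$ appears in the product for every contributing $w$, which your treatment of $\Phi_M^{+}$ silently assumes), and for such $w$, if $w\cdot\gamma<0$ and $w\cdot\theta<0$ then $w\cdot\delta<0$ for $\delta^{\vee}=\gamma^{\vee}+\theta^{\vee}$, since $\delta$ is a nonnegative combination of $\gamma$ and $\theta$. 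This shows the partner root $\gamma^{\prime}$ really does lie in the index set $\{\gamma>0:w\gamma<0\}$ for the same $w$, so the cancellation happens inside each $c_{w,\Lambda}$ separately; no cancellation across different $w$ against the characters $a(g)^{\rho_B+w\Lambda}$ is needed.

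Second, the combinatorial bookkeeping is misattributed. Kostant's multiset identity is used in the paper only for the $\Phi_M^{+}$ factors (which you handle instead by the elementary fact that $\langle\rho_B^P,\gamma^{\vee}\rangle$ is a positive integer, equal to $1$ exactly on $\Delta_B^P$ --- that part is fine and arguably cleaner). What actually guarantees that every problematic denominator finds a \emph{distinct} numerator partner is $\mathfrak{sl}_2$-representation theory: writing $\Gamma_k(j)$ for the roots $\gamma$ with ${}^L\mathfrak{g}_{\gamma^{\vee}}\subset r_j$ of $H$-eigenvalue $k$ and $w\gamma<0$, the operator $\mathrm{ad}(X)$ is injective on the strictly negative $H$-eigenspaces of the $\mathfrak{sl}_2$-module $r_j$, and by Lemma \ref{lem:X-action} it moves $\bigoplus_{\gamma\in\Gamma_k(j)}{}^L\mathfrak{g}_{\gamma^{\vee}}$ into $\bigoplus_{\delta\in\Gamma_{k+2}(j)}{}^L\mathfrak{g}_{\delta^{\vee}}$, whence $\#\Gamma_k(j)\le\#\Gamma_{k+2}(j)$ for $k<0$. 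Without this inequality your pairing could fail to be injective. Also note that matching $s_{\gamma^{\prime}}\equiv 1+s_\gamma$ on $S_P$ does not \emph{force} $\gamma^{\prime\vee}=\gamma^{\vee}+\theta^{\vee}$; it only requires $\gamma^{\prime}$ to lie in the same $r_j$ with $H$-eigenvalue raised by $2$, which is exactly what the $X$-shift supplies. With these two points filled in, your argument coincides with the paper's proof.
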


\begin{proof}
We prove the result in several steps. We begin with some preliminary
remarks. 

By the general theory of Eisenstein series, it is enough to prove
this for the constant term $c_{B}E_{\Lambda_{P}}^{P}$, or the equivalently
the corresponding statement for
\[
\mathsf{Res}_{S_{P}}c_{B}E_{\bullet}^{B}(g)=\mathsf{Res}_{S_{P}}\left(\sum_{w\in W}c_{w,\Lambda}a(g)^{\rho_{B}+w\cdot\Lambda}\right)
\]
The term $\mathsf{Res}_{S_{P}}c_{w,\Lambda}\neq0$ only if $w\cdot\Delta_{B}^{P}\subset-\Phi_{B}$.
Fix a $w\in W$ satisfying this property. 

The product formula for $c_{w,\Lambda}$ contains terms $\xi(1+\langle\Lambda,\gamma^{\vee}\rangle)$
that can contribute poles from the critical zeros only if $\text{Re}\langle\Lambda,\gamma^{\vee}\rangle\in(-1,0)$.
This happens at $\Lambda=\rho_{B}^{P}+s\varpi\in S_{P}$ if 
\[
-1<\langle\rho_{B}^{P},\gamma^{\vee}\rangle+\langle\varpi,\gamma^{\vee}\rangle\text{Re}(s)<0
\]
Note that $\langle\varpi,\gamma^{\vee}\rangle\ge0$ for all positive
roots $\gamma$. In the region $\text{Re}(s)\ge0$, the term $\xi(1+\langle\rho_{B}^{P},\gamma^{\vee}\rangle+s\langle\varpi,\gamma^{\vee}\rangle)$
can contribute poles from the critical zeros only if $\langle\rho_{B}^{P},\gamma^{\vee}\rangle<0$. 

We prove the theorem by showing that if $w\cdot\gamma<0$ so that
it occurs in the product formula of $c_{w,\Lambda}$ and $\langle\rho_{B}^{P},\gamma^{\vee}\rangle<0$,
it is cancelled as in example \ref{subsec:SL3} above for $SL_{3}$. 

\textbf{Step 1:} We first deal with $\gamma^{\vee}$ for $\gamma\in\Phi_{M}^{+}$.
From $w\cdot\Delta_{B}^{P}\subset-\Phi_{B}$, we know that $w$ maps
all the roots in $\Phi_{M}^{+}$ to negative roots. For $\gamma\in\Phi_{M}^{+}$,
\[
\langle\Lambda_{P},\gamma^{\vee}\rangle=0,\quad\text{ for }\Lambda_{P}\in\mathfrak{a}_{P}^{*}\otimes\mathbb{C}
\]
On $S_{P}=\rho_{B}^{P}+\mathfrak{a}_{P}^{*}\otimes\mathbb{C}$, we
have
\[
\langle\rho_{B}^{P}+\Lambda_{P},\gamma^{\vee}\rangle=\langle\rho_{B}^{P},\gamma^{\vee}\rangle,\quad\text{ for }\gamma\in\Phi_{M}^{+},\Lambda_{P}\in S_{P}
\]
By applying the above result of Kostant to $M$, we get
\[
\frac{\prod_{\alpha\in\Phi_{M}^{+}-\Delta_{B}^{P}}\xi(\langle\rho_{B}^{P},\alpha^{\vee}\rangle)}{\prod_{\alpha\in\Phi_{M}^{+}}\xi(1+\langle\rho_{B}^{P},\alpha^{\vee}\rangle)}\bigg|_{S_{P}}=\frac{1}{\prod_{i=1}^{p}\xi(a_{i})}\neq0
\]
for some integers $a_{1},\dots,a_{p}\ge2$. 

Thus, in the product for $\mathsf{Res}_{S_{P}}c_{w,\Lambda}\neq0$,
we need only concern with poles from the critical zeros of terms $\xi(1+\langle\Lambda,\gamma^{\vee}\rangle)$
for $\gamma\in\Phi_{N}$. We do this in the next few steps. 

\textbf{Step 2: }We now characterize the roots that may contribute
poles from the critical zeros of $\xi$ in terms of the structure
of $P$. Let $H,X,Y$ be the standard notation for the principal $\mathfrak{sl}_{2}$
triple in $^{L}\mathfrak{m}$. For $\gamma\in\Phi_{N}$, the one-dimensional
space $^{L}\mathfrak{g}_{\gamma^{\vee}}$ is a $H$-eigenspace with
eigenvalue $\langle2\rho_{B}^{P},\gamma^{\vee}\rangle$ under the
adjoint action. We need to prove a cancellation for $\gamma^{\vee}$
with negative $H$-eigenvalue. 

\textbf{Step 3: }Now we obtain the coroots that give cancellation
to the troublesome roots of the previous step. To do this, let 
\[
^{L}\mathfrak{n}=r_{1}\oplus r_{2}\oplus\cdots\oplus r_{m}
\]
where $r_{1},\dots,r_{m}$ are the irreducible constituents of the
adjoint representation of $^{L}M$ on $^{L}\mathfrak{n}$ as described
in \ref{subsec:LanglandsComp}. We have the $H$-eigenspace decomposition
\[
r_{j}=\bigoplus_{\ell\in\mathbb{Z}}W_{\ell}(j),\quad W_{\ell}(j):=\left\{ v\in r_{j}:H\cdot v=\ell v\right\} 
\]
Fix $j\in\{1,\dots,m\}$ and $k<0$. Let 
\[
\Gamma_{k}(j):=\left\{ \gamma\in\Phi_{B}:{}^{L}\mathfrak{g}_{\gamma^{\vee}}\subset W_{k}(j)\text{ and }w\cdot\gamma<0\right\} 
\]
For any $\delta^{\vee}=\gamma^{\vee}+\theta^{\vee}$ for some $\gamma\in\Gamma_{k}(j)$
and $\theta\in\Delta_{B}^{P}$, we have $w\cdot\delta<0$ since 
\[
\delta=\frac{|\gamma^{\vee}|^{2}}{|\delta^{\vee}|^{2}}\gamma+\frac{|\theta^{\vee}|^{2}}{|\delta^{\vee}|^{2}}\theta\quad\text{and}\quad w\cdot\gamma,\ w\cdot\theta<0.
\]
By lemma \ref{lem:X-action},
\[
X\cdot\left(\bigoplus_{\gamma\in\Gamma_{k}(j)}{}^{L}\mathfrak{g}_{\gamma^{\vee}}\right)\subset\bigoplus_{\delta\in\Gamma_{k+2}(j)}{}^{L}\mathfrak{g}_{\delta^{\vee}}
\]
Since $W_{k}(j)$ is an $H$-eigenspace of \emph{negative} eigenvalue,
the action of $X$ is injective and the cardinalities satisy
\[
\#\Gamma_{k}(j)\le\#\Gamma_{k+2}(j)
\]

\textbf{Step 4: }We now prove the cancellation for $\gamma\in\Gamma_{k}(j)$
when $k<0$. Let $\delta^{\vee}=\gamma^{\vee}+\theta^{\vee}$ for
some $\gamma\in\Gamma_{k}(j)$ and $\theta\in\Delta_{B}^{P}$. We
have
\[
\langle\rho_{B}^{P}+\varpi s,\delta^{\vee}\rangle=\langle\rho_{B}^{P},\delta^{\vee}\rangle+js
\]
\[
=\langle\rho_{B}^{P},\gamma^{\vee}\rangle+\langle\rho_{B}^{P},\theta^{\vee}\rangle+js
\]
\[
=\frac{k}{2}+1+js
\]
Thus,
\[
\frac{\xi\left(\langle\rho_{B}^{P}+s\varpi,\delta^{\vee}\rangle\right)}{\xi\left(\langle\rho_{B}^{P}+s\varpi,\gamma^{\vee}\rangle+1\right)}=\frac{\xi(\frac{k}{2}+1+js)}{\xi(\frac{k}{2}+1+js)}=1
\]
By $\#\Gamma_{k}(j)\le\#\Gamma_{k+2}(j)$ (for $k<0$), we have the
required cancellation. We have shown that for $\text{Re}(s)\ge0$,
the critical zeros of $\xi$ do not yield poles of $E_{s\varpi}^{P}$. 

The remaining poles are given by 
\[
S_{P}\cap S_{\gamma}\quad(\gamma\in\Phi_{N})
\]
by the remark about residues at the beginning of this subsection. 
\end{proof}
\begin{rem*}
Similar arguments appear in justifying the contour deformation of
Langlands (see \cite{Opdam_and_Others}\cite{Moeglin_Compositio,Moeglin-Waldspurger-89})
and is, in principle, known in great generality \cite{FrankeResidual}.
However, this cancellation is \emph{not }sufficient to determine the
poles of degenerate Eisenstein series, even for $SL_{3}$ as shown
in \ref{subsec:SL3}. 
\end{rem*}

\subsection{A simple function determining the poles of $E_{\bullet}^{P}(g)$}

Next we show that the poles of $E_{s\varpi}^{P}$ in the region $\text{Re}(s)\ge0$
are determined by a simple fraction with the numerator and the denominators
from the zeros and the poles of $E_{\Lambda}^{B}$. 
\[
E_{\Lambda}^{B}(g)=\prod_{\alpha\in\Phi_{B}}\frac{\langle\Lambda,\alpha^{\vee}\rangle}{\langle\Lambda,\alpha^{\vee}\rangle-1}\cdot E_{\Lambda}^{*}(g)
\]

\begin{prop}
\label{prop:reduction}Let 
\[
F(s)=\frac{\prod_{\alpha\in\Phi_{B}}\langle\rho_{B}^{P}+s\varpi,\alpha^{\vee}\rangle}{\prod_{\alpha\in\Phi_{B}-\Delta_{B}^{P}}\left(\langle\rho_{B}^{P}+s\varpi,\alpha^{\vee}\rangle-1\right)},
\]
For $\text{Re}(s)\ge0$, the map $s\mapsto F(s)^{-1}\cdot E_{s\varpi}^{P}(g)$
is a holomorphic function taking values in the space of smooth functions
of uniform moderate growth on $G(F)\backslash G(\mathbb{A})$. 
\end{prop}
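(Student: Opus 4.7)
My plan is to use Langlands' residue identity (Proposition~\ref{prop: residue}) to express $E^P_{s\varpi}$ in terms of the normalized Borel Eisenstein series $E^*_\Lambda$, and then to combine Jacquet's zero theorem with Proposition~\ref{prop:franke} to conclude holomorphy.

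First, I would substitute
\[
E_\Lambda^B = \prod_{\alpha \in \Phi_B} \frac{\langle \Lambda, \alpha^\vee\rangle}{\langle \Lambda, \alpha^\vee\rangle - 1}\, E_\Lambda^*
\]
into the residue formula from Proposition~\ref{prop: residue}. The factor $\prod_{\theta \in \Delta_B^P}(\langle \Lambda, \theta^\vee\rangle - 1)$ appearing in the residue cancels the $\theta \in \Delta_B^P$ factors in the denominator of $E_\Lambda^B$, while the corresponding numerator factors reduce to $1$ on $S_P$ (since $\langle \rho_B^P + s\varpi, \theta^\vee\rangle = 1$ for $\theta \in \Delta_B^P$). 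Collecting what remains yields the clean identity
\[
E_{s\varpi}^P(g) \;=\; c^{-1}\, F(s)\, E_{\rho_B^P + s\varpi}^*(g),
\]
so the proposition is equivalent to showing that $s \mapsto E_{\rho_B^P + s\varpi}^*(g)$ is holomorphic on $\mathrm{Re}(s) \ge 0$ with values in smooth functions of uniform moderate growth.

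By construction, the meromorphic factor $\prod_\alpha \frac{\langle \Lambda, \alpha^\vee\rangle - 1}{\langle \Lambda, \alpha^\vee\rangle}$ in the definition of $E^*_\Lambda$ cancels both the simple zeros of $E_\Lambda^B$ along $H_\alpha$ (from Jacquet's proposition) and the simple poles along $S_\alpha$ in the positive tube $T_B$, so $E_\Lambda^*$ is holomorphic on $T_B$. Since the line $\{\rho_B^P + s\varpi : \mathrm{Re}(s) \ge 0\}$ is in general not contained in $T_B$, I then appeal to Proposition~\ref{prop:franke}. Rewriting the identity as $E_{\rho_B^P + s\varpi}^* = c\, F(s)^{-1}\, E_{s\varpi}^P$, a pole of $E^*$ at $s_0$ with $\mathrm{Re}(s_0) \ge 0$ must arise either from an uncancelled pole of $E_{s\varpi}^P$ at $s_0$, or from a pole of $F(s)^{-1}$ at $s_0$. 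For the first possibility, Proposition~\ref{prop:franke} places any such pole at an $s_0$ with $\langle \rho_B^P + s_0\varpi, \gamma^\vee\rangle = 1$ for some $\gamma \in \Phi_N \subset \Phi_B - \Delta_B^P$, and the matching factor appears in the denominator of $F(s)$, giving the required cancellation. For the second possibility, a pole of $F(s)^{-1}$ at $s_0$ corresponds to some $\alpha \in \Phi_B - \Delta_B^P$ with $\langle \rho_B^P + s_0\varpi, \alpha^\vee\rangle = 0$; Jacquet's simple zero of $E_\Lambda^B$ along $H_\alpha$ transfers through the residue formula to a simple zero of $E_{s\varpi}^P$ at $s_0$, cancelling the pole. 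Moderate growth is preserved since $E_\Lambda^B$ lies in the requisite function space by Bernstein--Lapid and $F(s)^{-1}$ is a rational function of $s$.

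The main obstacle will be the careful bookkeeping of multiplicities at coincidence points in $S_P$ where several hyperplanes $S_\gamma$ (or several $H_\alpha$) accumulate on a single $s_0$: one has to argue that the order of the pole (respectively zero) of $E_{s\varpi}^P$ at $s_0$ does not exceed the multiplicity that $F(s)$ (respectively $F(s)^{-1}$) can absorb. This requires a local residue computation near each such coincidence, using the simplicity of every $S_\alpha$ and $H_\alpha$ singularity of $E_\Lambda^B$ in the respective transverse direction.
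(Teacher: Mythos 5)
Your first half is exactly the paper's computation: substituting the factorization of $E^B_\Lambda$ into Langlands' residue formula, noting that $\langle\rho_B^P+s\varpi,\theta^\vee\rangle=1$ for $\theta\in\Delta_B^P$, and arriving at $c\cdot F(s)^{-1}E^P_{s\varpi}=E^*_{\rho_B^P+s\varpi}$. The gap is in the second half, where you must show that $E^*_{\rho_B^P+s\varpi}$ is holomorphic for $\mathrm{Re}(s)\ge0$. Your argument there is circular: to cancel a pole of $E^P_{s\varpi}$ at $s_0$ against the matching denominator factors of $F(s)$, you need to know that the \emph{order} of the pole of $E^P_{s\varpi}$ at $s_0$ is at most the order of the pole of $F$ there --- but that inequality is precisely the assertion of the proposition. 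Proposition~\ref{prop:franke} supplies only the \emph{locations} $S_P\cap S_\gamma$ ($\gamma\in\Phi_N$), and several hyperplanes $S_\gamma$ (or $H_\alpha$) typically meet the line $\rho_B^P+s\varpi$ at the same $s_0$; this is the generic situation, and it is exactly what produces the higher-order poles recorded in the main theorem. You flag this multiplicity matching as ``the main obstacle'' and defer it to an unspecified ``local residue computation,'' but that computation is the actual content of the proof, so the proposal as written does not establish the statement.

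The paper resolves this by never leaving the multi-variable setting until the end: one works with the constant term $c_BE^B_\Lambda=\sum_w c_{w,\Lambda}a(g)^{\rho_B+w\Lambda}$ on $\mathfrak{a}_B^*\otimes\mathbb{C}$, where each $c_{w,\Lambda}$ contains each factor $\xi(\langle\Lambda,\gamma^\vee\rangle)/\xi(1+\langle\Lambda,\gamma^\vee\rangle)$ at most once, so every singularity along $S_\gamma$ and every zero along $H_\gamma$ is simple as a hyperplane singularity (not merely in the positive tube). Dividing by the rational prefactor therefore yields a function $c_BE^*_\Lambda$ whose only remaining potential singularities near $S_P\cap\{\mathrm{Re}(s)\ge0\}$ come from critical zeros of $\xi$, and these are eliminated by the cancellation argument in the proof of Proposition~\ref{prop:franke}. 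Holomorphy of the restriction $c_BE^*_\Lambda|_{S_P}$ is then automatic, and the correct pole orders of $E^P_{s\varpi}$ fall out because $F(s)$ is computed as the residue of the prefactor along $S_P$, with the multiplicities of coinciding hyperplanes accounted for by the product over all $\alpha\in\Phi_B$. To repair your proof you would need to replace the one-variable cancellation argument by this hyperplane-by-hyperplane division in $\mathfrak{a}_B^*\otimes\mathbb{C}$ (or an equivalent local statement) before restricting to the line.
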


\begin{proof}
By the general theory of Eisenstein series, it is enough to show this
for $c_{B}\left(F(s)^{-1}E_{s\varpi}^{P}\right)$. From proposition
\ref{prop:franke}, we know that the poles can occur only along the
intersections $S_{\alpha}\cap S_{P}$ ($\alpha\in\Phi_{N}$). 

The function $c_{B}E_{\Lambda}^{B}(g)$ has a simple zero along the
hyperplanes 
\[
H_{\alpha}=\left\{ \Lambda\in\mathfrak{a}_{B}^{*}\otimes\mathbb{C}:\langle\Lambda,\alpha^{\vee}\rangle=0\right\} ,\quad\text{(}\alpha\in\Phi_{B}\text{)},
\]
and has simple poles along $S_{\alpha}$ for $\alpha\in\Phi_{B}$.
We have
\[
c_{B}E_{\Lambda}^{B}(g)=\prod_{\alpha\in\Phi_{B}}\frac{\langle\Lambda,\alpha^{\vee}\rangle}{\langle\Lambda,\alpha^{\vee}\rangle-1}\cdot c_{B}E_{\Lambda}^{*}(g)
\]
Then $c_{B}E_{\Lambda}^{*}(g)$ extends to a meromorphic function
of $S_{P}$ and is \emph{holomorphic }at\emph{ }$\rho_{B}^{P}+s\varpi$
for $\text{Re}(s)\ge0$ by proposition \ref{prop:franke}. We have
(the constant $c\neq0$ below is from Langlands' residue formula \ref{prop: residue}),
\[
c\cdot c_{B}E_{s\varpi}^{P}(g)=\mathsf{Res}_{S_{P}}c_{B}E_{\Lambda}^{B}(g)
\]
\[
=\mathsf{Res}_{S_{P}}\left(\prod_{\alpha\in\Phi_{B}}\frac{\langle\Lambda,\alpha^{\vee}\rangle}{\langle\Lambda,\alpha^{\vee}\rangle-1}\cdot c_{B}E_{\Lambda}^{*}(g)\right)
\]
\[
=\mathsf{Res}_{S_{P}}\left(\prod_{\alpha\in\Phi_{B}}\frac{\langle\Lambda,\alpha^{\vee}\rangle}{\langle\Lambda,\alpha^{\vee}\rangle-1}\right)\cdot c_{B}E_{\Lambda}^{*}(g)\bigg|_{S_{P}}
\]
Note that
\[
\mathsf{Res}_{S_{P}}\prod_{\alpha\in\Phi_{B}}\frac{\langle\Lambda,\alpha^{\vee}\rangle}{\langle\Lambda,\alpha^{\vee}\rangle-1}=\frac{\prod_{\alpha\in\Phi_{B}}\langle\Lambda,\alpha^{\vee}\rangle}{\prod_{\alpha\in\Phi_{B}-\Delta_{B}^{P}}\langle\Lambda,\alpha^{\vee}\rangle-1}\bigg|_{S_{P}}
\]
\[
=\frac{\prod_{\alpha\in\Phi_{B}}\langle\rho_{B}^{P}+s\Lambda_{P},\alpha^{\vee}\rangle}{\prod_{\alpha\in\Phi_{B}-\Delta_{B}^{P}}\langle\rho_{B}^{P}+\Lambda_{P},\alpha^{\vee}\rangle-1}=\frac{\prod_{\alpha\in\Phi_{B}}\langle\rho_{B}^{P}+s\varpi,\alpha^{\vee}\rangle}{\prod_{\alpha\in\Phi_{B}-\Delta_{B}^{P}}\langle\rho_{B}^{P}+\varpi,\alpha^{\vee}\rangle-1}=F(s)
\]
Thus, 
\[
c\cdot F(s)^{-1}\cdot E_{s\varpi}^{P}=E_{\rho_{B}^{P}+s\varpi}^{*}
\]
The function $E_{\rho_{B}^{P}+s\varpi}^{*}$ is holomorphic for $\text{Re}(s)\ge0$. 
\end{proof}
\begin{rem}
\label{rem:CriticalZeros}Already the $\text{SL}_{2}$ Eisenstein
series $E_{s}(z)$ in the introduction has poles from the critical
zeros of $\xi(s)$ when $\text{Re}(s)<0$, since poles of $c(s)=\xi(s)/\xi(1+s)$
are poles of $E_{s}(z)$. However, these poles do not play a role
in contour deformation of Langlands (see Godement \cite{Godement_Boulder}
and Moeglin \cite{Moeglin_ICM}). 
\end{rem}

\subsection{\label{subsec:cancellationExplicated}Main theorem}

Now we prove the main theorem of this paper:
\begin{thm}
Let $G$ be a split semi-simple linear algebraic group over a number
field and $P=N\rtimes M$ be the standard Levi decomposition of a
standard maximal parabolic subgroup $P$. Let 
\[
^{L}\mathfrak{n}=r_{1}\oplus r_{2}\oplus\cdots\oplus r_{m}
\]
where $r_{1},\dots,r_{m}$ are the irreducible constituents of the
adjoint representation of $^{L}M$ on $^{L}\mathfrak{n}$ as described
in \ref{subsec:LanglandsComp}. Let 
\[
r_{j}\simeq\bigoplus_{\ell\ge0}V_{\ell}^{m_{\ell}(j)},\quad V_{k}=\text{sym}^{k}(\text{std})
\]
be the decomposition into irreducible constituents of $r_{j}$ under
the action of the principal $\mathfrak{sl}_{2}\mathbb{C}\subset{}^{L}\mathfrak{m}$.
Let 
\[
p(s)=\prod_{j=1}^{m}\prod_{\ell\ge0}(js-1-\ell/2)^{m_{\ell}(j)}\ \in\mathbb{C}[s]
\]

In the region $\mathsf{Re}(s)\ge0$, $p(s)\cdot E_{s\varpi}^{P}(g)$
is holomorphic. Is not identically zero as a function on $G(\mathbb{A})$
when $\text{Re}(s)>0$. 
\end{thm}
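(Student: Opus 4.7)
The plan is to apply Proposition 3.5, which reduces the problem to showing that $p(s)\cdot F(s)$ is holomorphic in $\text{Re}(s)\ge 0$, where
$$
F(s) \;=\; \frac{\prod_{\alpha\in\Phi_B}\langle\rho_B^P+s\varpi,\alpha^\vee\rangle}{\prod_{\alpha\in\Phi_B-\Delta_B^P}\bigl(\langle\rho_B^P+s\varpi,\alpha^\vee\rangle-1\bigr)}.
$$

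First I would isolate the $s$-independent contributions. For $\alpha\in\Phi_M^+$ we have $\langle\varpi,\alpha^\vee\rangle=0$, and Kostant's theorem applied to $M$ (as in Step 1 of Proposition 3.4) shows that the resulting factors aggregate to a non-zero rational constant. The $s$-dependent part is
$$
\tilde F(s) \;=\; \prod_{\alpha\in\Phi_N}\frac{j_\alpha s+\ell_\alpha/2}{j_\alpha s+\ell_\alpha/2-1},
$$
with $j_\alpha=\langle\varpi,\alpha^\vee\rangle$ the $r_{j_\alpha}$-level of $\alpha$ and $\ell_\alpha=\langle 2\rho_B^P,\alpha^\vee\rangle$ the $H$-eigenvalue of the principal $\mathfrak{sl}_2$-triple on the coroot line ${}^L\mathfrak{g}_{\alpha^\vee}$ (Step 2 of Proposition 3.4).

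Next I would reorganize the product using the principal $\mathfrak{sl}_2$-structure on each $r_j$. The number of $\alpha\in\Phi_N$ with $j_\alpha=j$ and $\ell_\alpha=k$ equals the multiplicity of weight $k$ in $r_j$, which by the decomposition $r_j\simeq\bigoplus_{\ell\ge 0}V_\ell^{m_\ell(j)}$ is $\sum_{\ell\ge|k|,\ \ell\equiv k\!\!\pmod 2}m_\ell(j)$. Consequently the factors contributed by a single $V_\ell$-string, ranging over the weights $k\in\{-\ell,-\ell+2,\dots,\ell\}$, telescope: with $u=js+\ell/2$,
$$
\prod_{i=0}^{\ell}\frac{u-i}{u-i-1} \;=\; \frac{u}{u-\ell-1} \;=\; \frac{js+\ell/2}{js-\ell/2-1}.
$$
Therefore
$$
\tilde F(s) \;=\; \prod_{j=1}^{m}\prod_{\ell\ge 0}\!\left(\frac{js+\ell/2}{js-\ell/2-1}\right)^{m_\ell(j)},
$$
and multiplication by $p(s)$ cancels the denominator and yields the polynomial
$$
p(s)\cdot\tilde F(s) \;=\; \prod_{j=1}^{m}\prod_{\ell\ge 0}(js+\ell/2)^{m_\ell(j)}.
$$
Combined with Proposition 3.5 this gives holomorphicity of $p(s)E^P_{s\varpi}(g)$ on $\text{Re}(s)\ge 0$. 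Non-vanishing on $\{\text{Re}(s)>0\}\times G(\mathbb{A})$ is immediate, since in the convergence region $E^P_{s\varpi}(g)$ is a sum of positive exponentials and $p(s)$ is a nonzero polynomial.

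The main obstacle is the telescoping identity. Its essence is that within each $V_\ell$-string the denominator factor contributed by the weight-$k$ coroot line (a pole at $s=(1-k/2)/j$) cancels exactly the numerator factor contributed by the weight-$(k-2)$ coroot line (a zero at the same $s=(2-k)/(2j)$); only the two boundary factors survive, namely the pole from the lowest weight $-\ell$ at $s=(1+\ell/2)/j$ and the zero from the highest weight $\ell$ at $s=-\ell/(2j)$. These surviving poles, with multiplicities $m_\ell(j)$, are precisely the zeros of $p(s)$.
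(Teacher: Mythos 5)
Your holomorphy argument is essentially the paper's own: after discarding the constant contribution of $\Phi_M^+$, you group the coroot lines of each $r_j$ into principal-$\mathfrak{sl}_2\mathbb{C}$ strings and telescope, arriving at $F(s)=\prod_{j}\prod_{\ell}\bigl((js+\ell/2)/(js-1-\ell/2)\bigr)^{m_\ell(j)}$ exactly as in Steps 1--4 of the paper's proof of the main theorem. (Invoking Kostant for the $\Phi_M^+$ factors is harmless overkill; all that is needed is $\langle\rho_B^P,\eta^\vee\rangle=1$ for $\eta\in\Delta_B^P$ and $\langle\rho_B^P,\eta^\vee\rangle>1$ for the remaining $\eta\in\Phi_M^+$.) That part is correct and matches the paper.

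The gap is in the non-vanishing claim. You justify it by saying that $E^P_{s\varpi}(g)$ is a sum of positive exponentials, but that description is valid only in the domain of convergence $\mathrm{Re}(s)\gg0$; for small $\mathrm{Re}(s)>0$ the Eisenstein series exists only by meromorphic continuation, and a priori it could vanish identically in $g$ at some such $s_0$ --- in particular at a zero of $p$, which would make the pole order of $E^P_{s\varpi}$ strictly smaller than the order of vanishing of $p$ and destroy the ``same order'' assertion the theorem is really after. Holomorphy together with non-vanishing for large $\mathrm{Re}(s)$ does not propagate leftward: a holomorphic family of functions can vanish identically at isolated parameter values. The paper closes this by using the identity $c\cdot E^P_{s\varpi}=F(s)\,E^{*}_{\rho_B^P+s\varpi}$ coming from Proposition \ref{prop:reduction}: since $p(s)F(s)=\prod_j\prod_\ell(js+\ell/2)^{m_\ell(j)}$ has no zeros in $\mathrm{Re}(s)>0$, everything reduces to the non-vanishing of $E^{*}$, which in turn rests on Jacquet's determination that the zeros of $E^B_\Lambda$ are precisely simple zeros along the hyperplanes $H_\alpha$, so that dividing $E^B_\Lambda$ by $\prod_{\alpha}\langle\Lambda,\alpha^\vee\rangle/(\langle\Lambda,\alpha^\vee\rangle-1)$ removes all of them. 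You need this additional input on the zeros of the Borel Eisenstein series to finish the non-vanishing statement.
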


\begin{proof}
We use proposition \ref{prop:reduction}. Let 
\[
F(s)=\frac{\prod_{\alpha\in\Phi_{B}}\langle\rho_{B}^{P}+s\varpi,\alpha^{\vee}\rangle}{\prod_{\alpha\in\Phi_{B}-\Delta_{B}^{P}}\langle\rho_{B}^{P}+\varpi,\alpha^{\vee}\rangle-1}
\]
We show that there are lots of cancellations as in the $\text{SL}_{3}$
case \ref{subsec:SL3}. The decomposition $P=N\rtimes M$ gives 
\[
\Phi_{B}=\Phi_{M}^{+}\sqcup\Phi_{N}
\]
where $\Phi_{M}^{+}$ and $\Phi_{N}$ are the roots with root groups
in $M\cap B$ and $N$ respectively. 

\textbf{Step 1: }Terms from $\Phi_{M}^{+}$ do not contribute. For
any $\eta\in\Phi_{M}^{+}$, the term $\langle\rho_{B}^{P}+\Lambda_{P},\eta^{\vee}\rangle=\langle\rho_{B}^{P},\eta^{\vee}\rangle$
is a non-zero constant. If $\eta\notin\Delta_{B}^{P}$, then $\langle\rho_{B}^{P},\eta^{\vee}\rangle>1$
and 
\[
\frac{\prod_{\eta\in\Phi_{M}^{+}}\langle\rho_{B}^{P}+\Lambda_{P},\eta^{\vee}\rangle}{\prod_{\eta\in\Phi_{M}^{+}-\Delta_{B}^{P}}\langle\rho_{B}^{P}+\Lambda_{P},\eta^{\vee}\rangle-1}=\text{constant}\ c>0
\]
Thus, only the terms from $\Phi_{N}$ need to be considered. 

\textbf{Step 2: }Grouping the terms in $\Phi_{N}$ for cancellation
in the next step. Let

\[
^{L}\mathfrak{n}=r_{1}\oplus r_{2}\oplus\cdots\oplus r_{m}
\]
and
\[
\Gamma_{j}:=\left\{ \gamma^{\vee}\in\Phi_{B}^{\vee}:{}^{L}\mathfrak{g}_{\gamma^{\vee}}\subset r_{j}\right\} 
\]
That is, $\langle\varpi,\gamma^{\vee}\rangle=j$ for all $\gamma^{\vee}\in\Gamma_{j}$.
Let $\{H,X,Y\}$ be the principal $\mathfrak{sl}_{2}\mathbb{C}$ triple
in $^{L}\mathfrak{m}$. Note that 
\[
r_{j}=\bigoplus_{\gamma\in\Gamma_{j}}{}^{L}\mathfrak{g}_{\gamma^{\vee}}
\]
is a decomposition of $r_{j}$ into $H$-eigenspaces (although not
under the full $\mathfrak{sl}_{2}\mathbb{C}$, see example \ref{exa:GL4}). 

\textbf{Step 3: }Cancellations. For $\gamma\in\Gamma_{j}$ and $\Lambda=\rho_{B}^{P}+s\varpi\in S_{P}$,
we have
\[
\langle\Lambda,\gamma^{\vee}\rangle=\langle\rho_{B}^{P},\gamma^{\vee}\rangle+\langle s\varpi,\gamma^{\vee}\rangle=\langle\rho_{B}^{P},\gamma^{\vee}\rangle+js
\]
Thus, only $H$ eigenvalues $\langle\rho_{B}^{P},\bullet\rangle$
play a role in the cancellation. For roots $\gamma_{0},\gamma_{1},\dots,\gamma_{k}\in\Gamma$
with 
\[
\left\{ \langle2\rho_{B}^{P},\gamma_{i}^{\vee}\rangle:i=0,\dots,k\right\} =\{-k,-k+2,\dots,k\},
\]
we have
\[
\prod_{i=0}^{k}\frac{\langle\rho_{B}^{P}+s\varpi,\gamma_{i}^{\vee}\rangle}{\langle\rho_{B}^{P}+s\varpi,\gamma_{i}^{\vee}\rangle-1}=\frac{js+k/2}{js-k/2-1}
\]

\textbf{Step 4: }Conclusion. We have 
\[
F(s)=\frac{\prod_{j=1}^{m}\prod_{\ell\ge0}(js+\ell/2)^{m_{\ell}(j)}}{\prod_{j=1}^{m}\prod_{\ell\ge0}(js-1-\ell/2)^{m_{\ell}(j)}}\ \in\mathbb{C}(s)
\]
Since the terms in the numerator vanish only for $s\le0$ and the
terms in the denominator vanish only for $s>0$, there can be no further
cancellation. If $p(s)$ is the denominator in the above expression
of $F(s)$, then by proposition \ref{prop:reduction}, we have $p(s)\cdot E_{s\varpi}^{P}(g)$
is holomorphic for $\text{Re}(s)\ge0$. Since the numerator of $F(s)\neq0$
when $\text{Re}(s)>0$, the non-vanishing assertion follows from the
non-vanishing of $E_{\rho_{B}^{P}+s\varpi}^{*}$ which itself follows
from the simplicity of the zeros of $E_{\Lambda}^{B}$ . 
\end{proof}

\subsection{\label{subsec:Computational-procedure}Explicit computations illustrated
for $A_{n}$ and $G_{2}$}

Let $P$ be a maximal parabolic subgroup of $G$ with $\Theta:=\Delta_{B}^{P}=\Delta_{B}-\{\beta\}$
for some $\beta\in\Delta_{B}$. To compute the poles of $E_{\Lambda}^{P}(g)$
in the positive tube, we need to decompose $^{L}\mathfrak{n}$ under
the action of the principal $\mathfrak{sl}_{2}\mathbb{C}$ in $^{L}\mathfrak{m}$. 

Let $H,X,Y$ be the standard $\mathfrak{sl}_{2}$ triple for the principal
$\mathfrak{sl}_{2}\mathbb{C}$ in $^{L}\mathfrak{m}$. Let $\gamma^{\vee}$
be such that 
\[
\gamma^{\vee}=\cdots+j\beta^{\vee}+\cdots
\]
Then $^{L}\mathfrak{g}_{\gamma^{\vee}}\subset r_{j}$. Since $\langle2\rho_{B}^{P},\theta^{\vee}\rangle=2$
for $\theta\in\Theta$, if $\gamma^{\vee}+\theta^{\vee}$ is a coroot,
then the $H$-eigenvalue corresponding to $\gamma^{\vee}+\theta^{\vee}$
is $2+\langle2\rho_{B}^{P},\gamma^{\vee}\rangle$. Thus, one can decompose
$^{L}\mathfrak{n}$ by counting the number of roots $\gamma^{\vee}$
with $^{L}\mathfrak{g}_{\gamma^{\vee}}\subset r_{j}$ of a given height.
This gives us a list of $H$-eigenvalues on $r_{j}$, which is enough
to decompose $r_{j}$ abstractly in terms of symmetric powers under
the principal $\mathfrak{sl}_{2}\mathbb{C}$ action. 
\begin{rem}
Note that ordering of roots by height and the $j$s occurring in the
decomposition of $^{L}\mathfrak{n}$ does not depend on the lattice
of characters. Thus, the result only depends on the \emph{root system
}of the dual group. That is, the results are identical for different
isogeny classes of $G$. 
\end{rem}

\begin{example}
We illustrate the procedure for $A_{n}$ with the Dynkin diagram labelling:\begin{center} 
\texttt{
$A_n$: \dynkin[labels = {1,2,,n}, edge length=1cm]A{} \newline 
}
\end {center}Let $\alpha_{1},\dots,\alpha_{n}$ be the corresponding simple roots.
The positive roots are of the form 
\[
r_{ij}:=\alpha_{i}+\dots+\alpha_{j},\quad1\le i\le j\le n
\]
Height of $r_{ij}=j-i+1$. There are $p_{i}:=n-i+1$ roots of height
$i$. 

Let $P=N\rtimes M$ be the standard Levi decomposition of a standard
\emph{maximal }parabolic subgroup $P$. Thus $\Delta_{B}-\Delta_{B}^{P}$
has cardinality one. Let $^{L}M$ be of type $A_{a}\times A_{b}$
where $a+b=n-1$. The number $m_{i}$ of positive roots of height
$i$ with root spaces in $^{L}\mathfrak{m}$ is 
\[
m_{i}=\begin{cases}
(a-i+1)+(b-i+1)=n-2i+1 & i\le\min\{a,b\}\\
\max\{a,b\}-i+1 & \min\{a,b\}<i\le\max\{a,b\}\\
0 & i>\max\{a,b\}
\end{cases}
\]
The number $n_{i}$ of roots of height $i$ with root spaces contained
in $^{L}\mathfrak{n}$ is 
\[
n_{i}=p_{i}-m_{i}=\begin{cases}
i & i\le\min\{a,b\}\\
n-\max\{a,b\}=\min\{a,b\}+1 & \min\{a,b\}<i\le\max\{a,b\}\\
n-i+1 & i>\max\{a,b\}
\end{cases}
\]
Thus, 
\[
^{L}\mathfrak{n}\simeq\bigoplus_{k=n-1-2\min\{a,b\}}^{(n-1)}V_{k},\quad k\text{ increments of }2
\]
When $\text{Re}(s)\ge0$, the Eisenstein series $E_{s\varpi}^{P}(g)$
has simple poles for
\[
s\in\left\{ 1+\frac{n-1}{2},\cdots,1+\frac{n-1-2\min\{a,b\}}{2}\right\} 
\]
or 
\[
s=\left\{ \frac{n+1}{2},\frac{n-1}{2},\cdots,\frac{n+1}{2}-\min\{a,b\}\right\} 
\]
\end{example}

\begin{example}
This example illustrates some features not present in type $A_{n}$.
Let $G=G_{2}$, the exceptional $k$-group split over $k$. Let $\alpha$
be the short simple root and $\beta$ be the long simple root of $G_{2}$.
The positive roots are 
\[
\Phi_{B}=\left\{ \alpha,\ \beta,\ \alpha+\beta,\ 2\alpha+\beta,\ 3\alpha+\beta,\ 3\alpha+2\beta\right\} 
\]
There are two maximal parabolic subgroups $P=N_{P}\rtimes M_{P}$
and $Q=N_{Q}\rtimes M_{Q}$. Let $\Delta_{B}^{P}=\{\alpha\}$ and
$\Delta_{B}^{Q}=\{\beta\}$.

Note that in $^{L}G\simeq G_{2}$, the root $\alpha^{\vee}$ is the
long simple root and the root $\beta^{\vee}$ is the short simple
root. 

For $P$: We have 
\[
^{L}\mathfrak{n}_{P}=r_{1}\oplus r_{2}\oplus r_{3},\quad r_{1},r_{3}\simeq V_{1}\text{ and }r_{2}\simeq V_{0}
\]
and
\[
s=\frac{3}{2},\frac{1}{2}\left(1+\frac{0}{2}\right),\frac{1}{3}\left(1+\frac{1}{2}\right)
\]
That is, $E_{s}^{P}(g)$ has a simple pole at $s=\frac{3}{2}$ and
a double pole at $s=\frac{1}{2}$. 

For $Q$: We have 
\[
^{L}\mathfrak{n}_{Q}=r_{1}\oplus r_{2},\quad r_{1}\simeq V_{3},\ r_{2}\simeq V_{0}
\]
and
\[
s=1+\frac{3}{2},\frac{1}{2}\left(1+\frac{0}{2}\right)
\]
That is, $E_{s}^{Q}(g)$ has a simple poles at $s=\frac{5}{2},\frac{1}{2}$.
\end{example}

\begin{rem*}
These results have been known at least since 1976 (see appendix III
of Langlands \cite{Langlands-Spectral}). 
\end{rem*}

\section{\label{sec:Classical}Computations for classical groups}

Throughout this section, we denote by $V_{k}$ the $(k+1)$-dimensional
irreducible representation of $\mathfrak{sl}_{2}\mathbb{C}$. We begin
by recalling the following fact.
\begin{fact}
\label{fact:alpha(H)=00003D2}For a semi-simple Lie algebra $\mathfrak{g}$
with a chosen basis of simple roots $\Delta$, the principal $\mathfrak{sl}_{2}=\text{span}\{H,X,Y\}$
satisfies 
\[
\alpha(H)=2\quad\text{for all }\alpha\in\Delta.
\]
\end{fact}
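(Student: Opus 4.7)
The plan is to exploit the explicit form $H = \sum_{\gamma \in \Phi^+} h_{\gamma^\vee}$ from subsection \ref{subsec:PrincipalSL2} (read as a statement for $\mathfrak{g}$ itself; the construction there is transferred formally between $\mathfrak{g}$ and $^L\mathfrak{g}$). Applied to a simple root $\alpha \in \Delta$, this gives
\[
\alpha(H) \;=\; \sum_{\gamma \in \Phi^+} \alpha(h_{\gamma^\vee}) \;=\; \sum_{\gamma \in \Phi^+} \langle \alpha, \gamma^\vee \rangle,
\]
so the claim reduces to showing that the right-hand side equals $2$.

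I would first split off the $\gamma = \alpha$ summand, which contributes $\langle \alpha, \alpha^\vee \rangle = 2$ by the standard normalization of coroots. It then suffices to verify that
\[
\sum_{\gamma \in \Phi^+ \setminus \{\alpha\}} \langle \alpha, \gamma^\vee \rangle \;=\; 0.
\]
The key step is the classical root-system fact that for a \emph{simple} root $\alpha$, the reflection $s_\alpha$ stabilizes $\Phi^+ \setminus \{\alpha\}$ (the unique positive root sent to a negative root is $\alpha$ itself). Since the coroot map $\gamma \mapsto \gamma^\vee$ is Weyl-equivariant, $s_\alpha$ also permutes the associated coroots, and hence the vector $v := \sum_{\gamma \in \Phi^+ \setminus \{\alpha\}} \gamma^\vee$ is fixed by $s_\alpha$. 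Because $s_\alpha(v) = v - \langle \alpha, v \rangle \alpha^\vee$, being fixed forces $\langle \alpha, v \rangle = 0$. Assembling, $\alpha(H) = 2 + 0 = 2$.

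There is no real obstacle here: the argument relies only on the two textbook facts that $\langle \alpha, \alpha^\vee \rangle = 2$ and that $s_\alpha$ stabilizes $\Phi^+ \setminus \{\alpha\}$ when $\alpha$ is simple. An equivalent packaging recognizes $\tfrac{1}{2}\sum_{\gamma \in \Phi^+} \gamma^\vee = \rho^\vee$ and invokes the dual identity $\langle \alpha, \rho^\vee \rangle = 1$ for simple $\alpha$; the Weyl reflection computation above is essentially the proof of that identity, and either presentation is suitable.
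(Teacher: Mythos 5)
Your argument is correct. Note that the paper offers no proof of this Fact: it is asserted as known, being implicit in the claim of subsection \ref{subsec:PrincipalSL2} that $[H,X]=2X$ for $X=\sum_{\alpha\in\Delta}x_{\alpha^{\vee}}$ and in the identification (citing Gross) of the restriction of $\varphi$ to the maximal torus with $2\rho_{B}$. Your verification --- writing $H=\sum_{\gamma\in\Phi^{+}}h_{\gamma^{\vee}}=2\rho^{\vee}$, splitting off the $\gamma=\alpha$ term, and using that $s_{\alpha}$ permutes $\Phi^{+}\setminus\{\alpha\}$ together with Weyl-equivariance of $\gamma\mapsto\gamma^{\vee}$ to kill the remaining sum --- is exactly the standard ``simple calculation'' the paper alludes to, so there is nothing to correct.
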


This characterization allows us to explicitly write $H$ in all our
computations using the data given in Bourbaki \cite{Bourbaki_LieTheory}
as appendices (called ``plates''). 

Throughout this section, let $P=N\rtimes M$ be the standard Levi
decomposition of a standard \emph{maximal} parabolic subgroup $P$.
To compute the poles of $E_{\Lambda}^{P}(g)$ in the right-half space,
we need to compute the decomposition of $^{L}\mathfrak{n}$ under
the action of the principal $\mathfrak{sl}_{2}$ with standard triples
$\{H,X,Y\}$ in $^{L}\mathfrak{m}$. Let $\Delta=\{\alpha_{1},\dots,\alpha_{n}\}$. 

We write $(\mu;m)$ to indicate that the Eisenstein series has a pole
of order $m$ at $s=\mu$. If the pole is simple, we only write $\mu$. 

\subsection{Description of the groups}

Let $k$ be a number field. We view $V=k^{n}$ as column vectors.
For an $n\times n$ matrix $\Omega$, let 
\[
G_{\Omega}(V)=\left\{ g\in GL(V):g^{T}\Omega g=\Omega\right\} 
\]
Let
\[
\text{SL}_{n}(V)=\left\{ g\in GL(V):\det g=1\right\} 
\]
Now we define the isometry groups. Let 
\[
\omega_{n}=\begin{bmatrix}0 & 0 & \cdots & 0 & 1\\
0 &  &  & 1 & 0\\
\vdots &  & \iddots &  & \vdots\\
0 & 1 &  &  & 0\\
1 & 0 & \cdots & 0 & 0
\end{bmatrix}\quad n\times n\text{ matrix}
\]
Let
\[
\text{Sp}_{2n}(V)=G_{\Omega}(V),\quad V=k^{2n},\ \Omega=\left(\begin{matrix}0 & \omega_{n}\\
-\omega_{n} & 0
\end{matrix}\right)
\]
and (for $n\ge2$)
\[
\text{SO}_{n}(V)=G_{\Omega}(V)\bigcap\text{SL}(V),\quad V=k^{n},\ \Omega=\omega_{n}
\]
We take the standard choice of maximal isotropic flags to define our
Borel subgroup as upper-triangular matrices in these groups. 

\subsection{$B_{n}$: odd special orthogonal groups}

The Dynkin diagram is 

\begin{center} 
\texttt{
$B_n$: \dynkin[labels = {1,2,,n-1,n}, edge length=1cm]B{} \newline 
}
\end {center}The dual group is $\text{Sp}_{2n}(\mathbb{C})$. 

\subsubsection{Siegel parabolic case}

For $\Delta-\Delta_{B}^{P}=\{\alpha_{n}\}$, the derived group of
$^{L}M$ is of type $A_{n-1}$. The corresponding $H$ is 

\[
H=\text{diag}(n-1,n-3,\dots,1-n,n-1,n-3,\dots,1-n)
\]
Note that $\alpha(H)=2$ for all $\alpha\in\Delta_{B}^{P}$. We identify
$^{L}\mathfrak{n}$ with $n\times n$ matrices symmetric about the
non-principal diagonal. Then 
\[
^{L}\mathfrak{n}=r_{1}\simeq V_{2(n-1)}\oplus V_{2(n-3)}\oplus\cdots\oplus\begin{cases}
V_{0} & n\text{ odd}\\
V_{2} & n\text{ even}
\end{cases}
\]
Poles at 
\[
s=n,n-2,n-4,\dots,\begin{cases}
1 & n\text{ odd}\\
2 & n\text{ even}
\end{cases}.
\]

\begin{rem}
\label{rem:B_n-case}We note that if $m\in M(\mathbb{A})\simeq GL_{n}(\mathbb{A})$,
then 
\[
e^{\langle\varpi,H_{P}(m)\rangle}=|\det m|^{s/2}
\]
IA common parametrization is $|\det m|^{s}$ in which case we have
simple poles at 
\[
s=\frac{n}{2},\frac{n}{2}-1,\frac{n}{2}-2,\dots,\begin{cases}
1/2 & n\text{ odd}\\
1 & n\text{ even}
\end{cases}
\]
\end{rem}

\subsubsection{Non-Siegel parabolic subgroups}

For $\Delta-\Delta_{B}^{P}=\{\alpha_{a}\}$ for $1\le a<n$, the derived
group of $^{L}M$ is of type $A_{a-1}\times C_{b}$ with $a+b=n$
and $b\ge1$. The corresponding $H$ is 

\[
H=(a-1,\dots,1-a,2b-1,\dots,3,1,-1,-3,\dots,1-2b,a-1\dots,1-a)
\]
using fact \ref{fact:alpha(H)=00003D2}. We have 
\[
^{L}\mathfrak{n}=r_{1}\oplus r_{2}
\]

We parametrize $r_{2}$ with $a\times a$ matrices symmetric about
the non-principal diagonal. We have 
\[
r_{2}\simeq V_{2(a-1)}\oplus V_{2(a-3)}\oplus\cdots\oplus\begin{cases}
V_{0} & a\text{ odd}\\
V_{2} & a\text{ even}
\end{cases}
\]
and this contributes at a simple pole at the points 
\[
s=\frac{a}{2},\frac{a}{2}-1,\frac{a}{2}-1\dots,\begin{cases}
1/2 & a\text{ odd}\\
1 & a\text{ even}
\end{cases}
\]

We parametrize $r_{1}$ with $a\times2b$ matrices. The corresponding
$H$-eigenvalues are given by 
\[
\begin{bmatrix}a-2b & (a-2b)+2 & \cdots & (a+2b)-2\\
(a-2b)+2 &  &  & (a+2b)-4\\
\vdots &  & \iddots & \vdots\\
4-(a+2b) & 6-(a+2b)\\
2-(a+2b) & 4-(a+2b) & \cdots & 2b-a
\end{bmatrix}
\]
The structure is similar to the $A_{n}$ computation and we have 
\[
r_{1}\simeq\bigoplus_{k=a+2b-2-2\left(\min\{a,2b\}-1\right)}^{k=a+2b-2}V_{k},\quad k\text{ increments of }2
\]
This contributes a simple pole at 
\[
s=\frac{a+2b}{2},\frac{a+2b}{2}-1,\dots,\frac{a+2b}{2}-\left(\min\{a,2b\}-1\right)
\]

$E_{s\varpi}^{P}$ has double poles when
\[
\frac{a}{2}\ge\frac{a+2b}{2}-\left(\min\{a,2b\}-1\right)\iff\min\{a,2b\}\ge b+1
\]
at 
\[
s=\frac{a}{2},\dots,\frac{a+2b+2}{2}-\min\{a,2b\}
\]

\subsection{$C_{n}$: symplectic groups}

The Dynkin diagram is 

\begin{center} 
\texttt{
$C_n$: \dynkin[labels = {1,2,,n-1,n}, edge length=1cm]C{} \newline 
}
\end {center}

\subsubsection{Siegel parabolic case}

For $\Delta-\Delta_{B}^{P}=\{\alpha_{n}\}$, the derived group of
$^{L}M$ is of type $A_{n-1}$. The corresponding $H$ is 

\[
H=\text{diag}(n-1,n-3,\dots,1-n,0,n-1,n-3,\dots,1-n)
\]
We have 
\[
^{L}\mathfrak{n}=r_{1}\oplus r_{2}
\]
with $r_{1}\simeq V_{n-1}$ which gives a simple pole at $s=\frac{n+1}{2}$.
We can parametrized $r_{2}$ by $n\times n$ matrices skew-symmetric
about the non-principal diagonal and 
\[
r_{2}\simeq V_{2(n-2)}\oplus V_{2(n-4)}\oplus\cdots\oplus\begin{cases}
V_{0} & n\text{ even}\\
V_{2} & n\text{ odd}
\end{cases}
\]
We have simple poles at
\[
s=\frac{n+1}{2},\frac{n-1}{2},\frac{n-3}{2},\dots,\begin{cases}
\frac{1}{2} & n\text{ even}\\
1 & n\text{ odd}
\end{cases}
\]

\begin{rem*}
This result was proved by Kudla-Rallis \cite{Kudla_Rallis}. 
\end{rem*}

\subsubsection{Non-Siegel parabolic subgroups}

For $\Delta-\Delta_{B}^{P}=\{\alpha_{a}\}$ for $1\le a<n$, the derived
group of $^{L}M$ is of type $A_{a-1}\times B_{b}$ with $a+b=n$
and $b\ge1$. The corresponding $H$ is 

\[
H=(a-1,\dots,1-a,2b,\dots,2,0,-2,\dots,-2b,a-1\dots,1-a)
\]
using fact \ref{fact:alpha(H)=00003D2}. We have $^{L}\mathfrak{n}=r_{1}\oplus r_{2}$
where
\[
r_{2}\simeq V_{2(a-2)}\oplus V_{2(a-4)}\oplus\cdots\oplus\begin{cases}
V_{0} & a\text{ even}\\
V_{2} & a\text{ odd}
\end{cases}
\]
This contributes simple poles at 
\[
s=\frac{a-1}{2},\frac{a-3}{2},\dots,\begin{cases}
\frac{1}{2} & a\text{ even}\\
1 & a\text{ odd}
\end{cases}
\]

We parametrize $r_{1}$ with $a\times(2b+1)$ matrices. The corresponding
$H$-eigenvalues are given by 
\[
\begin{bmatrix}(a-2b)-1 & (a-2b)+1 & \cdots & (a+2b)-1\\
(a-2b)-3 &  &  & (a+2b)-3\\
\vdots &  & \iddots & \vdots\\
3-(a+2b) & 5-(a+2b)\\
1-(a+2b) & 3-(a+2b) & \cdots & (2b-a)+1
\end{bmatrix}
\]
Thus, 
\[
r_{1}\simeq\bigoplus_{k=a+2b-1-2\left(\min\{a,2b+1\}-1\right)}^{k=a+2b-1}V_{k},\quad k\text{ increments of }2
\]
This contributes simple poles at 
\[
s=\frac{a+2b+1}{2},\frac{a+2b+1}{2}-1,\dots,\frac{a+2b+1}{2}-\left(\min\{a,2b+1\}-1\right)
\]

We have double poles when 
\[
\frac{a-1}{2}\ge\frac{a+2b+1}{2}-\left(\min\{a,2b+1\}-1\right)\iff\min\{a,2b+1\}\ge b+2
\]
at 
\[
\frac{a-1}{2},\frac{a-3}{2},\dots,\frac{a+2b+3}{2}-\min\{a,2b+1\}
\]

\begin{rem*}
See Hanzer \cite{Hanzer_nonSiegel} for a more extensive discussion
of this case. 
\end{rem*}

\subsection{$D_{n}$: even special orthogonal groups}

We take $n\ge3$. The Dynkin diagram is 

\begin{center} 
\texttt{
$D_n$: \dynkin[labels = {1,2,,n-2,n-1,n}, edge length=1cm]D{} \newline 
}
\end {center}The simple roots are denoted $\alpha_{1},\dots,\alpha_{n}$ with the
subscript corresponding to the labelled node in the Dynkin diagram.
The positive roots are $\alpha_{ij},\alpha'_{ij}$ and $\alpha'_{i}$,
where 
\[
\alpha_{ij}=\alpha_{i}+\cdots+\alpha_{j}\quad(1\le i\le j\le n-1)
\]
\[
\alpha'_{ij}=\alpha_{i}+\cdots+\alpha_{j-1}+2\alpha_{j}+\cdots+2\alpha_{n-2}+\alpha_{n-1}+\alpha_{n}\quad(1\le i<j\le n-1)
\]
\[
\alpha'_{i}=\alpha_{i}+\cdots+\alpha_{n-2}+\alpha_{n}\quad(1\le i\le n-1)
\]
The notation gives the convenience $\alpha'_{n-1}=\alpha_{n}$. 

The structure of parabolic subgroups of this case is a bit different.
There are two ``Siegel-type'' parabolic subgroups corresponding
to $\Delta-\Delta_{B}^{P}=\{\alpha_{n}\}\text{ or }\{\alpha_{n-1}\}$.
The derived group of the Levi in both cases in of type $A_{n-1}$.
We have $^{L}\text{SO}_{2n}=\text{SO}_{2n}(\mathbb{C})$.

\subsubsection{Siegel-type parabolic case}

For $\Delta-\Delta_{B}^{P}=\{\alpha_{n}\}$, the derived group of
$^{L}M$ is of type $A_{n-1}$. The corresponding $H$ is 

\[
H=\text{diag}(n-1,n-3,\dots,1-n,1-n,3-n,\dots,n-1)
\]
We identify $^{L}\mathfrak{n}$ with $n\times n$ matrices skew-symmetric
spanned by all $\{\alpha'_{ij}\}$ and $\{\alpha'_{i}\}_{i=1}^{n-1}$.
We have 
\[
^{L}\mathfrak{n}=r_{1}\simeq V_{2(n-2)}\oplus V_{2(n-4)}\oplus\cdots\oplus\begin{cases}
V_{0} & n\text{ even}\\
V_{2} & n\text{ odd}
\end{cases}
\]
We have simple poles at 
\[
s=n-1,n-3,n-5,\dots,\begin{cases}
1 & n\text{ even}\\
2 & n\text{ odd}
\end{cases}
\]

For $\Delta-\Delta_{B}^{P}=\{\alpha_{n}\}$, we have $^{L}\mathfrak{n}=r_{1}$
spanned by all $\{\alpha'_{ij}\}$ and $\{\alpha_{i,n-1}\}_{i=1}^{n-1}$.
Therefore the result is the same as the above case, namely, simple
poles at 
\[
s=n-1,n-3,n-5,\dots,\begin{cases}
1 & n\text{ even}\\
2 & n\text{ odd}
\end{cases}
\]
See remark \ref{rem:B_n-case}. 

\subsubsection{Non-Siegel parabolic subgroups}

For $\Delta-\Delta_{B}^{P}=\{\alpha_{a}\}$ for $1\le a\le n-2$,
the derived group of $^{L}M$ is of type $A_{a-1}\times D_{b}$ with
$a+b=n$ and $b\ge2$. The corresponding $H$ is 
\[
H=\left(a-1,\dots,1-a,2(b-1),\dots,2,0,0,-2,\dots,2(1-b),a-1,\dots,1-a\right)
\]
using fact \ref{fact:alpha(H)=00003D2}. We have $^{L}\mathfrak{n}=r_{1}\oplus r_{2}$
where
\[
r_{2}\simeq V_{2(a-2)}\oplus V_{2(a-4)}\oplus\cdots\oplus\begin{cases}
V_{0} & a\text{ even}\\
V_{2} & a\text{ odd}
\end{cases}
\]
This contributes simple poles at 
\[
s=\frac{a-1}{2},\frac{a-3}{2},\dots,\begin{cases}
\frac{1}{2} & a\text{ even}\\
1 & a\text{ odd}
\end{cases}
\]

We parametrize $r_{1}$ by two $a\times2b$ matrices with the corresponding
$H$-eigenvalues 
\[
\begin{bmatrix}a-2b+1 & \cdots & a-1 & a-1 & \cdots & a+2b-3\\
a-2b-1 &  & a-3 & a-3 &  & a+2b-5\\
\vdots &  & \vdots & \vdots &  & \vdots\\
3-a-2b & \cdots & 1-a & 1-a & \cdots & -a+2b-1
\end{bmatrix}
\]
The presence of two consecutive zeros in $H$ means we get an ``extra''
$V_{a-1}$. Writing it separately, 
\[
r_{1}\simeq V_{a-1}\oplus\bigoplus_{k=a+2b-3-2(\min\{a,2b-1\}-1)}^{a+2b-3}V_{k}\quad k\text{ increments of }2
\]
This contributes simple poles at 
\[
k=\frac{a+2b-1}{2},\dots,\frac{a+2b-1}{2}-\left(\min\{a,2b-1\}-1\right);\quad\frac{a+1}{2}
\]

We have poles at
\[
\frac{a+1}{2},\frac{a-1}{2},\dots,\frac{a+2b-1}{2}-\left(\min\{a,2b-1)-1\right)
\]
with double poles when 
\[
\frac{a+1}{2}\ge\frac{a+2b-1}{2}-\left(\min\{a,2b-1)-1\right)\iff\min\{a,2b-1\}\ge b
\]
at 
\[
\frac{a+1}{2},\dots,\frac{a+2b+1}{2}-\min\{a,2b-1\}.
\]

\section{\label{sec:Exceptional}Computations for exceptional Chevalley groups }

We use the standard numbering of roots for the exceptional groups
as in Bourbaki \cite{Bourbaki_LieTheory}. The ordering of roots by
height is available in Springer \cite{Springer_roots_by_height} (with
a different numbering). 

The Chevalley groups $E_{6},E_{7},$ and $E_{8}$ are self dual and
under $G\to{}^{L}G$ there is no relabelling of vertices in the corresponding
map between Dynkin diagrams. This is not true for $G_{2}$ and $F_{4}$. 

We write $(\mu;m)$ to indicate that the Eisenstein series has a pole
of order $m$ at $s=\mu$. If the pole is simple, we only write $\mu$.
The results of this section have been obtained using computers by
Segal and Halawi \cite{halawi2023poles}. 

\subsection{Type $E_{6}$}

The Dynkin diagram with standard numbering is 

\begin{center} 
\texttt{
$E_6$: \centering \dynkin[label,edge length=1cm]E6 \newline 
}
\end {center}

\subsubsection{Poles for $P_{1},P_{6}$}

\[
r_{1}\simeq V_{4}\oplus V_{10}
\]
\[
s=3,6
\]

\subsubsection{Poles for $P_{3},P_{5}$}

\[
r_{1}\simeq V_{1}\oplus V_{3}\oplus V_{5}\oplus V_{7}
\]
\[
r_{2}\simeq V_{4}
\]
\[
s=\left(\frac{3}{2};2\right),\frac{5}{2},\frac{7}{2},\frac{9}{2}
\]

\subsubsection{Poles for $P_{4}$}

\[
r_{1}\simeq V_{1}^{\oplus2}\oplus V_{3}^{\oplus2}\oplus V_{5}
\]
\[
r_{2}\simeq V_{0}\oplus V_{2}\oplus V_{4}
\]
\[
r_{3}\simeq V_{1}
\]
\[
s=\left(\frac{1}{2};2\right),1,\left(\frac{3}{2};3\right),\left(\frac{5}{2};2\right),\frac{7}{2}
\]

\subsubsection{Poles for $P_{2}$}

\[
r_{1}\simeq V_{3}\oplus V_{5}\oplus V_{9}
\]
\[
r_{2}\simeq V_{0}
\]
\[
s=\frac{1}{2},\frac{5}{2},\frac{7}{2},\frac{11}{2}
\]

\subsection{Type $E_{7}$}

The Dynkin diagram with standard numbering is 

\begin{center} 
\texttt{
$E_7$: \centering \dynkin[label,edge length=1cm]E7 \newline 
}
\end {center}

\subsubsection{$P_{1}$}

\[
r_{1}\simeq V_{5}\oplus V_{9}\oplus V_{15}
\]
\[
r_{2}\simeq V_{0}
\]
\[
s=\frac{1}{2},\frac{7}{2},\frac{11}{2},\frac{17}{2}
\]

\subsubsection{$P_{2}$}

\[
r_{1}\simeq V_{0}\oplus V_{4}\oplus V_{6}\oplus V_{8}\oplus V_{12}
\]
\[
r_{2}\simeq V_{6}
\]
\[
s=1,2,3,4,5,7
\]

\subsubsection{$P_{3}$}

\[
r_{1}\simeq V_{1}\oplus V_{3}\oplus V_{5}\oplus V_{7}\oplus V_{9}
\]
\[
r_{2}\simeq V_{0}\oplus V_{4}\oplus V_{8}
\]
\[
r_{3}\simeq V_{1}
\]
\[
s=\left(\frac{1}{2};2\right),\left(\frac{3}{2};2\right),\left(\frac{5}{2};2\right),\frac{7}{2},\frac{9}{2},\frac{11}{2}
\]

\subsubsection{$P_{4}$}
\begin{lyxcode}
\[
r_{1}\simeq V_{0}\oplus V_{2}^{\oplus2}\oplus V_{4}^{\oplus2}\oplus V_{6}
\]
\[
r_{2}\simeq V_{2}^{\oplus2}\oplus V_{4}\oplus V_{6}
\]
\[
r_{3}\simeq V_{2}\oplus V_{4}
\]
\[
r_{4}\simeq V_{2}
\]
\[
s=\frac{1}{2},\frac{2}{3},\left(1;4\right),\frac{3}{2},(2;3),(3;2),4
\]
\end{lyxcode}

\subsubsection{$P_{5}$ }

\[
r_{1}\simeq V_{0}\oplus V_{2}\oplus V_{4}^{\oplus2}\oplus V_{6}\oplus V_{8}
\]
\[
r_{2}\simeq V_{2}\oplus V_{4}\oplus V_{6}
\]
\[
r_{3}\simeq V_{4}
\]
\[
s=\left(1;3\right),\frac{3}{2},\left(2;2\right),(3;2),4,5
\]

\subsubsection{$P_{6}$}

\[
r_{1}\simeq V_{3}\oplus V_{5}\oplus V_{9}\oplus V_{11}
\]
\[
r_{2}\simeq V_{0}\oplus V_{8}
\]
\[
s=\frac{1}{2},\left(\frac{5}{2};2\right),\frac{7}{2},\frac{11}{2},\frac{13}{2}
\]

\subsubsection{$P_{7}$}

\[
r_{1}\simeq V_{0}\oplus V_{8}\oplus V_{16}
\]
\[
s=\frac{1}{2},5,9
\]

\subsection{Type $E_{8}$}

The Dynkin diagram with standard numbering is 

\begin{center} 
\texttt{
$E_8$: \centering \dynkin[label,edge length=1cm]E8 \newline 
}
\end {center}

\subsubsection{$P_{1}$}

\[
r_{1}\simeq V_{3}\oplus V_{9}\oplus V_{11}\oplus V_{15}\oplus V_{21}
\]
\[
r_{2}\simeq V_{0}\oplus V_{12}
\]
\[
s=\frac{1}{2},\frac{5}{2},\frac{7}{2},\frac{11}{2},\frac{13}{2},\frac{17}{2},\frac{23}{2}
\]

\subsubsection{$P_{2}$}

\[
r_{1}\simeq V_{3}\oplus V_{5}\oplus V_{7}\oplus V_{9}\oplus V_{11}\oplus V_{15}
\]
\[
r_{2}\simeq V_{0}\oplus V_{4}\oplus V_{8}\oplus V_{12}
\]
\[
r_{3}\simeq V_{7}
\]
\[
s=\frac{1}{2},\left(\frac{3}{2};2\right),\left(\frac{5}{2};2\right),\left(\frac{7}{2};2\right),\frac{9}{2},\frac{11}{2},\frac{13}{2},\frac{17}{2}
\]

\subsubsection{$P_{3}$}

\[
r_{1}\simeq V_{1}\oplus V_{3}\oplus V_{5}\oplus V_{7}\oplus V_{9}\oplus V_{11}
\]
\[
r_{2}\simeq V_{0}\oplus V_{4}\oplus V_{6}\oplus V_{8}\oplus V_{12}
\]
\[
r_{3}\simeq V_{5}\oplus V_{7}
\]
\[
r_{4}\simeq V_{6}
\]
\[
s=\frac{1}{2},1,\frac{7}{6},\left(\frac{3}{2};3\right),2,\left(\frac{5}{2};2\right),\left(\frac{7}{2};2\right),\frac{9}{2},\frac{11}{2},\frac{13}{2}
\]

\subsubsection{$P_{4}$}

\[
r_{1}\simeq V_{1}\oplus V_{3}^{\oplus2}\oplus V_{5}^{\oplus2}\oplus V_{7}
\]

\[
r_{2}\simeq V_{0}\oplus V_{2}\oplus V_{4}^{\oplus2}\oplus V_{6}\oplus V_{8}
\]

\[
r_{3}\simeq V_{1}\oplus V_{3}\oplus V_{5}\oplus V_{7}
\]

\[
r_{4}\simeq V_{2}\oplus V_{4}\oplus V_{6}
\]

\[
r_{5}\simeq V_{1}\oplus V_{3}
\]
\[
r_{6}\simeq V_{4}
\]
\[
s=\frac{3}{10},\left(\frac{1}{2};5\right),\frac{3}{4},\frac{5}{6},(1;2),\frac{7}{6},\left(\frac{3}{2};4\right),2,\left(\frac{5}{2};3\right),\left(\frac{7}{2};2\right),\frac{9}{2}
\]

\subsubsection{$P_{5}$}

\[
r_{1}\simeq V_{1}\oplus V_{3}^{2}\oplus V_{5}^{2}\oplus V_{7}\oplus V_{9}
\]
\[
r_{2}\simeq V_{0}\oplus V_{2}\oplus V_{4}^{2}\oplus V_{6}\oplus V_{8}
\]
\[
r_{3}\simeq V_{1}\oplus V_{3}\oplus V_{5}\oplus V_{7}
\]
\[
r_{4}\simeq V_{2}\oplus V_{6}
\]
\[
r_{5}\simeq V_{3}
\]
\[
s=\left(\frac{1}{2};4\right),\frac{5}{6},(1;2),\frac{7}{6},\left(\frac{3}{2};4\right),2,\left(\frac{5}{2};3\right),\left(\frac{7}{2};2\right),\frac{9}{2},\frac{11}{2}
\]

\subsubsection{$P_{6}$}

\[
r_{1}\simeq V_{2}\oplus V_{4}\oplus V_{6}\oplus V_{8}\oplus V_{10}\oplus V_{12}
\]
\[
r_{2}\simeq V_{2}\oplus V_{6}\oplus V_{8}\oplus V_{10}
\]
\[
r_{3}\simeq V_{4}\oplus V_{10}
\]
\[
r_{4}\simeq V_{2}
\]
\[
s=\frac{1}{2},\left(1;2\right),\left(2;3\right),\frac{5}{2},(3;2),4,5,6,7
\]

\subsubsection{$P_{7}$}

\[
r_{1}\simeq V_{1}\oplus V_{7}\oplus V_{9}\oplus V_{15}\oplus V_{17}
\]
\[
r_{2}\simeq V_{0}\oplus V_{8}\oplus V_{16}
\]
\[
r_{3}\simeq V_{1}
\]
\[
s=\left(\frac{1}{2};2\right),\frac{3}{2},\frac{5}{2},\left(\frac{9}{2};2\right),\frac{11}{2},\frac{17}{2},\frac{19}{2}
\]

\subsubsection{$P_{8}$}

\[
r_{1}\simeq V\oplus V_{17}\oplus V_{27}
\]
\[
r_{2}\simeq V_{0}
\]
\[
s=\frac{1}{2},\frac{11}{2},\frac{19}{2},\frac{29}{2}
\]

\subsection{Type $F_{4}$}

The Dynkin diagram with standard numbering is 

\begin{center} 
\texttt{
$F_4$: \centering \dynkin[label,edge length=1cm]F4 \newline 
}
\end {center}Recall that there is a relabelling of roots under $G\to{}^{L}G$:
$\alpha_{1}^{\vee}\rightsquigarrow\alpha_{4}$, $\alpha_{2}^{\vee}\rightsquigarrow\alpha_{3}$,
$\alpha_{3}^{\vee}\rightsquigarrow\alpha_{2}$, and $\alpha_{4}^{\vee}\rightsquigarrow\alpha_{1}$.
If $P_{i}=N_{i}\rtimes M_{i}$ corresponds to $\alpha_{i}$, then
\[
^{L}\mathfrak{n}_{1}\simeq\mathfrak{n}_{4}\quad{}^{L}\mathfrak{n}_{2}\simeq\mathfrak{n}_{3},\quad{}^{L}\mathfrak{n}_{3}\simeq\mathfrak{n}_{2},\ \text{ and }\ {}^{L}\mathfrak{n}_{4}\simeq\mathfrak{n}_{1}
\]
under the action of the corresponding $SL_{2}$. 

\subsubsection{$P_{1}$}

\[
r_{1}\simeq V_{0}\oplus V_{6}
\]
\[
r_{2}\simeq V_{6}
\]
\[
s=1,2,4
\]

\subsubsection{$P_{2}$}

\[
r_{1}\simeq V_{1}\oplus V_{3}
\]
\[
r_{2}\simeq V_{0}\oplus V_{2}\oplus V_{4}
\]
\[
r_{3}\simeq V_{1},\quad r_{4}=V_{2}
\]
\[
s=\left(\frac{1}{2};3\right),1,\left(\frac{3}{2};2\right),\frac{5}{2}
\]

\subsubsection{$P_{3}$}

\[
r_{1}=V_{1}\oplus V_{3}\oplus V_{5}
\]
\[
r_{2}=V_{0}\oplus V_{4}
\]
\[
r_{3}=V_{1}
\]
\[
s=\left(\frac{1}{2};2\right),\left(\frac{3}{2};2\right),\frac{5}{2},\frac{7}{2}
\]

\subsubsection{$P_{4}$}

\[
r_{1}=V_{3}\oplus V_{9}
\]
\[
r_{2}=V_{0}
\]
\[
s=\frac{1}{2},\frac{5}{2},\frac{11}{2}
\]

\newpage{}

\printbibliography

\end{document}